\pdfoutput=1
\pdftrailerid{}
\newcounter{corresponding}
\documentclass{extracta-temp-26}

\usepackage{latexsym}
\usepackage{amsmath}
\usepackage{amssymb}
\usepackage{amsfonts}
\usepackage{amsthm}
\usepackage[english]{babel}
\usepackage[cp850]{inputenc}
\usepackage{graphics}
\usepackage{mathtools}
\usepackage{enumitem}
\usepackage{xcolor}
\usepackage{tikz-cd}
\usepackage{microtype}
\usepackage{hyperref}
\usepackage[capitalise,nameinlink,noabbrev]{cleveref}

\numberwithin{equation}{section}

\newtheorem{teo}{Theorem}[section]
\newtheorem{lem}[teo]{Lemma}
\newtheorem{prop}[teo]{Proposition}

\theoremstyle{definition}
\newtheorem{defi}[teo]{Definition}
\newtheorem{exa}[teo]{Example}

\theoremstyle{remark}
\newtheorem{rem}[teo]{Remark}

\crefname{teo}{Theorem}{Theorems}
\Crefname{teo}{Theorem}{Theorems}
\crefname{lem}{Lemma}{Lemmas}
\Crefname{lem}{Lemma}{Lemmas}
\crefname{prop}{Proposition}{Propositions}
\Crefname{prop}{Proposition}{Propositions}
\crefname{defi}{Definition}{Definitions}
\Crefname{defi}{Definition}{Definitions}
\crefname{exa}{Example}{Examples}
\Crefname{exa}{Example}{Examples}
\crefname{rem}{Remark}{Remarks}
\Crefname{rem}{Remark}{Remarks}
\crefname{enumi}{}{}
\Crefname{enumi}{}{}
\creflabelformat{enumi}{#2\textup{(#1)}#3}

\setlist[itemize]{noitemsep,nolistsep,leftmargin=1.6em}
\setlist[enumerate]{noitemsep,nolistsep,leftmargin=1.9em,label=\arabic*.,ref=\arabic*}

\definecolor{typLink}{HTML}{1F4E79}
\hypersetup{
  colorlinks=true,
  linkcolor=typLink,
  citecolor=typLink,
  urlcolor=typLink,
  pdftitle={Groupoid Homology and Classifying-Space Homology Are Not Isomorphic},
  pdfauthor={Luciano Melodia},
  pdfcreator={},
  pdfproducer={}
}

\newcommand{\G}{\mathcal{G}}
\newcommand{\Z}{\mathbb{Z}}
\newcommand{\N}{\mathbb{N}}
\newcommand{\R}{\mathbb{R}}
\newcommand{\id}{\mathrm{id}}
\newcommand{\Hsing}{H^{\mathrm{sing}}}
\newcommand{\Csing}{C^{\mathrm{sing}}}
\newcommand{\dsing}{\partial^{\mathrm{sing}}}
\newcommand{\real}[1]{\lvert #1\rvert}
\newcommand{\gen}[1]{\langle #1\rangle}
\newcommand{\restr}[2]{#1\rvert_{#2}}
\DeclareMathOperator{\supp}{supp}
\DeclareMathOperator{\im}{im}
\DeclareMathOperator{\pr}{pr}
\DeclarePairedDelimiter{\abs}{\lvert}{\rvert}
\newcommand{\term}[1]{\textbf{#1}}
\newcommand{\step}[1]{\textbf{#1}}
\makeatletter
\newcommand{\keepwithnext}{\leavevmode\par\nobreak\@afterheading}
\makeatother
\newcommand{\subhead}[1]{\subsection*{#1}\keepwithnext}

\makeatletter
\def\ps@comienzo{%
  \let\@oddhead\@empty
  \let\@evenhead\@empty
  \let\@oddfoot\@empty
  \let\@evenfoot\@empty}
\makeatother

\title{Groupoid Homology and Classifying-Space Homology Are Not Isomorphic}

\author{Luciano Melodia\orcidid{0000-0002-7584-7287}}

\address{{\em Department of Mathematics \\[1ex] Friedrich-Alexander-Universit\"at Erlangen-N\"urnberg\\[1ex] Cauerstra{\ss}e 11, 91058 Erlangen, Germany \\[1ex]
    \correo{luciano.melodia@fau.de} }}

\mailcorresponding{luciano.melodia@fau.de}

\abstract{For an ample groupoid \(\G\), Matui-type groupoid homology \(H_\bullet(\G;\Z)\) is built from the nerve \(\G_\bullet\) via the Moore complex of compactly supported locally constant chains \(C_c(\G_n,\Z)\), with differential the alternating sum of pushforwards along the face maps. For a discrete group the theory agrees with the singular homology of the classifying space. For a totally disconnected locally compact Hausdorff space, viewed as a groupoid of units, it computes compactly supported cohomology instead, which is not a homotopy invariant. We make the resulting discrepancy explicit: for the unit groupoid on the Cantor set \(X\) we compute \(H_0(\G;\Z)\cong C(X,\Z)\), a countable group, whereas \(\Hsing_0(B\G;\Z)\cong\bigoplus_{x\in X}\Z\) has cardinality \(2^{\aleph_0}\). Cardinality alone separates the two groups, and it separates them in degree \(0\) already. In every positive degree they agree for this groupoid.}

\keywords{ample groupoid, \'etale groupoid, groupoid homology, classifying space, singular homology, Cantor set}

\msc{22A22, 55R35, 55N35, 55U10, 54H11}

\begin{document}

\maketitle

\section{Introduction and Statement}
A groupoid gives rise to two homotopy-theoretic invariants, both built from the same nerve \(\G_\bullet\), yet with opposite variance in the chain data.
On the one hand, Matui-type groupoid homology \(H_\bullet(\G;\Z)\) is the homology of the Moore complex (\cref{def:moore}) of compactly supported locally constant \(\Z\)-valued functions on the nerve, the differential being the alternating sum of fiberwise-sum pushforwards along the face maps.
The theory goes back to Crainic and Moerdijk \cite[Section~3]{crainic2000homology} and was recast for ample groupoids by Matui \cite[Definition~3.1]{matui2012homology} and recalled in \cite[Section~4]{farsi2019ample}.
On the other hand, the geometric realization of \(\G_\bullet\) is the classifying space \(B\G\), whose singular homology \(\Hsing_\bullet(B\G;\Z)\) records its homotopy type.

For a discrete group \(\Gamma\), regarded as a groupoid with a single unit, the two invariants agree: groupoid homology is then group homology \cite[3.5(1)]{crainic2000homology}, and group homology is the singular homology of the classifying space \(B\Gamma\) \cite[Chapter~II, p.~36]{brown1994cohomology}.
One might be tempted to expect the same for every ample groupoid.
Groups sit at one end of that class, and what happens at the other end argues against it.
A totally disconnected locally compact Hausdorff space \(X\), regarded as a groupoid consisting of units only, has groupoid homology \(H_n(X;\Z)\cong H_c^{-n}(X;\Z)\), the cohomology with compact supports placed in negative degrees \cite[3.5(3)]{crainic2000homology}.
In particular \(H_0(X;\Z)\cong C_c(X,\Z)\) and \(H_n(X;\Z)=0\) for \(n\ge 1\), as stated for zero-dimensional spaces in \cite[Proposition~4.4]{farsi2019ample}.
Compactly supported cohomology is not a homotopy invariant, so for a space there is no reason for the two theories to agree.
Any claim of agreement across the whole class has to account for both ends of it, groups and spaces.
We work out the second.

Degree \(0\) already separates the two theories.
Groupoid homology records there the locally constant integer-valued functions on the unit space, singular homology the path components.
On a totally disconnected unit space these two pieces of data are incomparable, and the following proposition turns the gap between them into an unconditional non-isomorphism, by cardinality.

\begin{prop}\label{prop:main}
Let \(X\coloneqq\{0,1\}^{\N}\) be the Cantor set and let \(\G\coloneqq(X\rightrightarrows X)\) be the unit groupoid.
Then
\begin{equation}\label{eq:main-moore}
H_0(\G;\Z)\cong C(X,\Z),\qquad
H_n(\G;\Z)=0\quad\text{for all } n\ge 1,
\end{equation}
whereas
\begin{equation}\label{eq:main-sing}
\Hsing_0(B\G;\Z)\cong\bigoplus_{x\in X}\Z.
\end{equation}
The first group is countable and the last has cardinality \(2^{\aleph_0}\).
In particular \(H_0(\G;\Z)\not\cong\Hsing_0(B\G;\Z)\).
\end{prop}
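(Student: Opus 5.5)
The plan is to compute both sides directly from the nerve. For the unit groupoid every composable string is constant: \(\G_n=\{(x,\dots,x)\}\cong X\) for all \(n\ge 0\), and every face and degeneracy map is the identity of \(X\).

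\emph{Moore side.} The Moore complex of \cref{def:moore} has \(C_c(\G_n,\Z)\cong C_c(X,\Z)=C(X,\Z)\) in each degree, because \(X\) is compact. Pushforward along a homeomorphism is the identity, so the differential \(\partial_n=\sum_{i=0}^n(-1)^i (d_i)_*\) is multiplication by \(\sum_{i=0}^n(-1)^i\). This sum is \(0\) for \(n\) odd and \(1\) for \(n\) even with \(n\ge 2\). The complex therefore reads
\[
\cdots\xrightarrow{\ \id\ } C(X,\Z)\xrightarrow{\ 0\ } C(X,\Z)\xrightarrow{\ \id\ } C(X,\Z)\xrightarrow{\ 0\ } C(X,\Z)\to 0 .
\]
Here \(\partial_1=0\), \(\partial_2=\id\), \(\partial_3=0\), and so on. Consequently \(H_0=C(X,\Z)/\im\partial_1=C(X,\Z)\), and for \(n\ge1\) every cycle is a boundary, giving \eqref{eq:main-moore}. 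If the paper's Moore complex is the normalized one instead, the positive-degree terms vanish, since all simplices are degenerate, and the same answer follows. This is consistent with the cited \cite[Proposition~4.4]{farsi2019ample}.

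\emph{Singular side.} Since \(\G_\bullet\) is the constant simplicial space on \(X\), its geometric realization is \(X\times|\Delta^0|\cong X\). To see this, note that all higher simplices are degenerate and are collapsed in \(|\cdot|\), or equivalently that \(|\text{const}\,X|\cong X\times|\Delta^\bullet_{\text{const}}|=X\). This holds for both the fat and thin realization up to homotopy, as long as one is careful. Then \(\Hsing_0(B\G;\Z)\cong\Hsing_0(X;\Z)\) is free on the path components. The Cantor set is totally disconnected, so every path \([0,1]\to X\) is constant: its image is connected, hence a point. The path components are thus the points, which yields \eqref{eq:main-sing}.

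\emph{Cardinality.} \(C(X,\Z)\) is countable. A continuous map \(X\to\Z\) has compact image, hence finite image, and the fibres are clopen. Every clopen subset of \(X\) is a finite union of cylinder sets, of which there are countably many. So each function is determined by a finite amount of countable data. On the other side, \(\bigoplus_{x\in X}\Z\) has cardinality \(|X|=2^{\aleph_0}\), since it injects into the finite subsets of \(X\) times finite tuples of integers. Hence the two groups cannot be isomorphic.

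The main obstacle is the identification \(B\G\simeq X\). It depends on which realization the paper uses: the fat realization of a constant simplicial space is \(X\times\|\ast\|\), and \(\|\ast\|\) is contractible. I would handle either case by exhibiting the projection \(B\G\to X\) together with the section induced by the vertex inclusion, and then checking that \(B\G\to X\) is a homotopy equivalence via a fibrewise contraction of the realized simplex factor.
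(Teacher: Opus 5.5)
Your proposal is correct and follows essentially the paper's route. The shared steps are the constant nerve with its alternating $0/\id$ Moore complex, $B\G\cong X$ via the projection $P$ and the vertex section $\jmath$, path components of $X$ being points, and a compactness-plus-cylinders argument for countability of $C(X,\Z)$; you count clopen sets as finite unions of cylinders, whereas the paper uses a uniform level-$N$ resolution.

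Two small points. First, the paper's realization is the thin one over all monotone maps. There the relation $(t,y)\sim(\ast_0,y)$ coming from $[n]\to[0]$ already makes $P$ and $\jmath$ mutually inverse homeomorphisms, so no fat-realization hedging or homotopy argument is needed. Second, the non-isomorphism uses the lower bound $\abs{\bigoplus_{x\in X}\Z}\ge 2^{\aleph_0}$, which comes from the injection $x\mapsto e_x$; the upper bound you justify is not what is needed.
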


All notions entering \cref{prop:main} are defined in the body of the note: the unit groupoid in \cref{ex:unit}, groupoid homology \(H_\bullet(\G;\Z)\) in \cref{def:homology}, the classifying space \(B\G\) and singular homology \(\Hsing_\bullet\) in \cref{sec:classifying}, and the direct sum \(\bigoplus_{x\in X}\Z\) in \cref{def:dirsum}.
The groupoid homology of a space is not computed here for the first time. It appears as \cite[Proposition~4.4]{farsi2019ample} and is contained in \cite[3.5(3)]{crainic2000homology}.
We give the elementary proof because the comparison with the classifying space is the point of this note, and that comparison needs both sides in one explicit model.

\Cref{sec:pushforward} defines \'etale and ample groupoids, the nerve, the pushforward of compactly supported locally constant functions, and groupoid homology.
\Cref{sec:moore} computes \(H_\bullet(\G;\Z)\) for the Cantor unit groupoid.
\Cref{sec:classifying} identifies \(B\G\) with \(X\) and computes \(\Hsing_0\).
\Cref{sec:cardinality} separates the two groups by cardinality and completes the proof of \cref{prop:main}.

\section{Groupoid Homology via Pushforward and the Moore Complex}\label{sec:pushforward}
This section supplies the definitions used in \cref{prop:main}, in the order in which they build on one another: groupoids, \'etale and ample groupoids, the nerve, the fiberwise-sum pushforward, and the Moore complex, whose homology is groupoid homology.
Functoriality of the pushforward under local homeomorphisms (\cref{lem:pushforward}) is what makes the differential square to zero.

\subhead{Groupoids and the Nerve}

\begin{defi}\label{def:groupoid}
A \term{groupoid} \(\G\) is a small category all of whose morphisms are invertible.
We identify \(\G\) with its set of morphisms, called \term{arrows}, and its objects with the \term{units}, the identity morphisms \(\G^{(0)}\subseteq\G\).
We write \(r,s\colon\G\to\G^{(0)}\) for the maps assigning to an arrow the unit at its target and at its source, \(\G^{(2)}\coloneqq\{(\gamma,\eta)\in\G\times\G\mid s(\gamma)=r(\eta)\}\) for the set of \term{composable pairs}, \((\gamma,\eta)\mapsto\gamma\eta\) for composition and \(\gamma\mapsto\gamma^{-1}\) for inversion.
A \term{topological groupoid} is a groupoid \(\G\) equipped with a topology for which composition \(\G^{(2)}\to\G\) and inversion \(\G\to\G\) are continuous, where \(\G^{(2)}\) carries the subspace topology of \(\G\times\G\) and \(\G^{(0)}\) that of \(\G\).
\end{defi}

In a topological groupoid the maps \(r\) and \(s\) are continuous, since \(r(\gamma)=\gamma\gamma^{-1}\) and \(s(\gamma)=\gamma^{-1}\gamma\), and inversion is a homeomorphism, being a continuous involution.

\begin{defi}\label{def:etale}
An \term{\'etale groupoid} is a topological groupoid \(\G\) whose topology is locally compact and Hausdorff and whose range map \(r\colon\G\to\G^{(0)}\) is a local homeomorphism: every \(\gamma\in\G\) has an open neighborhood \(U\subseteq\G\) such that \(r(U)\) is open in \(\G^{(0)}\) and \(\restr{r}{U}\colon U\to r(U)\) is a homeomorphism.
A \term{bisection} of an \'etale groupoid is an open set \(U\subseteq\G\) such that \(\restr{r}{U}\) and \(\restr{s}{U}\) are homeomorphisms onto open subsets of \(\G^{(0)}\).
An \'etale groupoid is \term{ample} if \(\G^{(0)}\) has a basis of compact open sets.
\end{defi}

Since \(s=r\circ(\cdot)^{-1}\) and inversion is a homeomorphism, the source map of an \'etale groupoid is a local homeomorphism as well.
Every arrow of an \'etale groupoid lies in a bisection: intersect an open neighborhood on which \(r\) restricts to a homeomorphism onto an open set with one on which \(s\) does.
On the open intersection both restrictions remain injective, continuous and open onto their open images.
The convention for \'etale groupoids is that of \cite[Section~2]{matui2012homology}.
For ample groupoids compare \cite[Section~2.1]{farsi2019ample}, where a groupoid is called ample if it is \'etale with zero-dimensional unit space, equivalently if it has a basis of compact open bisections.

\begin{exa}\label{ex:unit}~
\begin{enumerate}
\item\label{it:ex-group} A discrete group \(\Gamma\) is an ample groupoid: all elements are arrows, the only unit is the neutral element, \(r\) and \(s\) are constant, and the topology is discrete.
\item\label{it:ex-unit} Every locally compact Hausdorff space \(X\) is a topological groupoid with \(\G=\G^{(0)}=X\), \(r=s=\id_X\) and \(xx=x\) for all \(x\in X\), the \term{unit groupoid} \(X\rightrightarrows X\).
It is \'etale, and it is ample exactly if \(X\) has a basis of compact open sets.
The Cantor set \(\{0,1\}^{\N}\) has such a basis, given by the sets of sequences with prescribed first \(n\) coordinates, so the unit groupoid of \cref{prop:main} is ample.
\item\label{it:ex-real} The unit groupoid of the real line \(\R\) is \'etale but not ample, since \(\R\) has no nonempty compact open subset.
\end{enumerate}
\end{exa}

For \(n\ge 0\) set \([n]\coloneqq\{0,1,\dots,n\}\).
A map \(\theta\colon[m]\to[n]\) is \term{monotone} if \(\theta(i)\le\theta(j)\) whenever \(i\le j\).
For \(n\ge 1\) and \(0\le i\le n\), the \(i\)-th \term{coface} \(\delta^n_i\colon[n-1]\to[n]\) is the unique monotone injection whose image omits \(i\).
Explicitly, \(\delta^n_i(k)=k\) for \(k<i\) and \(\delta^n_i(k)=k+1\) for \(k\ge i\).

\begin{lem}\label{lem:coface}
For \(n\ge 2\) and \(0\le i<j\le n\) one has
\begin{equation}\label{eq:coface-identity}
\delta^n_j\circ\delta^{n-1}_i=\delta^n_i\circ\delta^{n-1}_{j-1}.
\end{equation}
\end{lem}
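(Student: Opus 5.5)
Both sides of \eqref{eq:coface-identity} are maps \([n-2]\to[n]\), so I will prove the identity by evaluating both sides at an arbitrary \(k\in[n-2]\) and comparing, using only the explicit formula \(\delta^m_\ell(k)=k\) for \(k<\ell\) and \(\delta^m_\ell(k)=k+1\) for \(k\ge\ell\). Since \(i<j\), i.e.\ \(i\le j-1\), the value of \(k\) relative to the two thresholds \(i\) and \(j-1\) splits into three cases: \(k<i\), \(i\le k<j-1\), and \(k\ge j-1\). In each case I compute the inner coface first and then decide which branch of the outer coface applies.

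For \(k<i\), one has \(\delta^{n-1}_i(k)=k<i<j\), so the left side gives \(k\). On the right, \(k<i\le j-1\) gives \(\delta^{n-1}_{j-1}(k)=k<i\), so \(\delta^n_i\) returns \(k\) as well. For \(i\le k<j-1\), one has \(\delta^{n-1}_i(k)=k+1<j\), so the left side gives \(k+1\). On the right, \(\delta^{n-1}_{j-1}(k)=k\ge i\), so \(\delta^n_i\) yields \(k+1\). For \(k\ge j-1\), one has \(\delta^{n-1}_i(k)=k+1\ge j\), so the left side gives \(k+2\). On the right, \(\delta^{n-1}_{j-1}(k)=k+1\ge j>i\), so again one gets \(k+2\).

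An alternative argument avoids the case split: both composites are monotone injections \([n-2]\to[n]\), and each one's image omits exactly \(\{i,j\}\). On the left, \(\delta^n_j\) omits \(j\) and the image of \(\delta^{n-1}_i\) omits \(i\), which \(\delta^n_j\) sends to \(i\) because \(i<j\). On the right, \(\delta^n_i\) omits \(i\) and sends the omitted \(j-1\) to \(j\) because \(j-1\ge i\). A monotone injection is determined by its image, so the two composites agree. There is no real obstacle in either argument. The only point needing care is the boundary case \(k=j-1\) (respectively the inequality \(j-1\ge i\)), where the strictness of \(i<j\) is used.
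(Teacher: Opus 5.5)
Your proof is correct, and your ``alternative'' argument is exactly the paper's proof. There, both composites are monotone injections \([n-2]\to[n]\), such a map is determined by its image, and injectivity of the outer coface together with \(\delta^n_j(i)=i\) (as \(i<j\)) and \(\delta^n_i(j-1)=j\) (as \(j-1\ge i\)) shows that both images equal \([n]\setminus\{i,j\}\).

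Your primary argument takes a genuinely different route: the pointwise check over the three ranges \(k<i\), \(i\le k<j-1\) and \(k\ge j-1\). It uses nothing beyond the defining formula for \(\delta^m_\ell\), so it does not need the fact that a monotone injection is determined by its image. The cost is a case split. In the last case, \(\delta^{n-1}_i(k)=k+1\) rests on \(k\ge j-1\ge i\), which you use but do not state.

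The image argument is shorter and explains why the identity holds: both sides are the unique monotone injection whose image misses exactly \(i\) and \(j\).
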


\begin{proof}
Composites of monotone injections are monotone injections, and a monotone injection is determined by its image, so it suffices to compare images.
Since \(\delta^n_j(i)=i\) for \(i<j\), the left-hand side has image
\begin{equation}\label{eq:coface-left}
\begin{aligned}
\delta^n_j\bigl([n-1]\setminus\{i\}\bigr)
&=\bigl([n]\setminus\{j\}\bigr)\setminus\{i\}&&\quad\text{\(\delta^n_j\) injective}\\
&=[n]\setminus\{i,j\}.
\end{aligned}
\end{equation}
Since \(\delta^n_i(j-1)=j\) for \(j-1\ge i\), the right-hand side has image
\begin{equation}\label{eq:coface-right}
\begin{aligned}
\delta^n_i\bigl([n-1]\setminus\{j-1\}\bigr)
&=\bigl([n]\setminus\{i\}\bigr)\setminus\{j\}&&\quad\text{\(\delta^n_i\) injective}\\
&=[n]\setminus\{i,j\}
\end{aligned}
\end{equation}
as well.
\end{proof}

\begin{defi}\label{def:simplicial}
A \term{simplicial space} \(L_\bullet\) is a family \((L_n)_{n\ge 0}\) of topological spaces together with a continuous map \(\theta^*\colon L_n\to L_m\) for every monotone \(\theta\colon[m]\to[n]\), such that \((\id_{[n]})^*=\id_{L_n}\) for all \(n\ge 0\) and \((\theta\circ\theta')^*=\theta'^*\circ\theta^*\) for all monotone \(\theta'\colon[l]\to[m]\) and \(\theta\colon[m]\to[n]\).
A \term{morphism of simplicial spaces} \(f_\bullet\colon L_\bullet\to L'_\bullet\) is a family of continuous maps \(f_n\colon L_n\to L'_n\) with \(f_m\circ\theta^*=\theta^*\circ f_n\) for every monotone \(\theta\colon[m]\to[n]\).
It is an \term{isomorphism} if every \(f_n\) is a homeomorphism.
\end{defi}

If every \(f_n\) is a homeomorphism, the family \((f_n^{-1})_{n\ge 0}\) is again a morphism, since \(f_m\circ\theta^*=\theta^*\circ f_n\) gives \(\theta^*\circ f_n^{-1}=f_m^{-1}\circ\theta^*\).
An isomorphism of simplicial spaces has an inverse morphism.

\begin{defi}\label{def:nerve}
Let \(\G\) be a topological groupoid with unit space \(X\coloneqq\G^{(0)}\).
Set \(\G_0\coloneqq X\) and, for \(n\ge 1\),
\begin{equation}\label{eq:nerve-n}
\G_n\coloneqq\{(\gamma_1,\dots,\gamma_n)\in\G^n\mid s(\gamma_i)=r(\gamma_{i+1})\ \text{for }1\le i\le n-1\},
\end{equation}
with the subspace topology of \(\G^n\).
For \(g=(\gamma_1,\dots,\gamma_n)\in\G_n\) define units \(x_0(g)\coloneqq r(\gamma_1)\) and \(x_k(g)\coloneqq s(\gamma_k)\) for \(1\le k\le n\).
For \(g=x\in\G_0\) set \(x_0(g)\coloneqq x\).
For \(0\le a\le b\le n\) write \(\gamma_{a,b}\coloneqq\gamma_{a+1}\gamma_{a+2}\cdots\gamma_b\in\G\) for \(a<b\), and \(\gamma_{a,a}\coloneqq x_a(g)\).
The product is well defined by associativity, and induction on \(b-a\), respectively \(c-b\), from associativity and the unit laws gives
\begin{equation}\label{eq:partial-products}
r(\gamma_{a,b})=x_a(g),\qquad
s(\gamma_{a,b})=x_b(g),\qquad
\gamma_{a,b}\,\gamma_{b,c}=\gamma_{a,c}\quad\text{for } a\le b\le c.
\end{equation}
The \term{nerve} of \(\G\) is the family \(\G_\bullet=(\G_n)_{n\ge 0}\) with the structure maps
\begin{equation}\label{eq:nerve-structure}
\theta^*\colon\G_n\to\G_m,
\qquad
\theta^*(g)\coloneqq\bigl(\gamma_{\theta(0),\theta(1)},\gamma_{\theta(1),\theta(2)},\dots,\gamma_{\theta(m-1),\theta(m)}\bigr)
\end{equation}
for monotone \(\theta\colon[m]\to[n]\) with \(m\ge 1\), and \(\theta^*(g)\coloneqq x_{\theta(0)}(g)\) for \(m=0\).
The string \(\theta^*(g)\) is composable since \(s(\gamma_{\theta(j-1),\theta(j)})=x_{\theta(j)}(g)=r(\gamma_{\theta(j),\theta(j+1)})\).
For \(n\ge 1\) and \(0\le i\le n\) the \(i\)-th \term{face map} is \(d_i\coloneqq(\delta^n_i)^*\colon\G_n\to\G_{n-1}\).
\end{defi}

Unwinding the definition: for \(n\ge 2\),
\begin{equation}\label{eq:faces}
d_i(\gamma_1,\dots,\gamma_n)=
\begin{cases}
(\gamma_2,\dots,\gamma_n), & i=0,\\
(\gamma_1,\dots,\gamma_i\gamma_{i+1},\dots,\gamma_n), & 0<i<n,\\
(\gamma_1,\dots,\gamma_{n-1}), & i=n,
\end{cases}
\end{equation}
while for \(n=1\) the two face maps are \(d_0=s\) and \(d_1=r\colon\G_1=\G\to\G_0=\G^{(0)}\).
The map \(\theta^*\) for the monotone surjection \([n+1]\to[n]\) that takes the value \(i\) twice inserts the unit \(x_i(g)\) after the \(i\)-th entry.
These maps are the \term{degeneracy maps} \(s_i\colon\G_n\to\G_{n+1}\) for \(0\le i\le n\).

\begin{lem}\label{lem:nerve}
For every topological groupoid \(\G\) the nerve \(\G_\bullet\) is a simplicial space, and for all \(n\ge 2\) and \(0\le i<j\le n\),
\begin{equation}\label{eq:face-identity}
d_i\circ d_j=d_{j-1}\circ d_i\colon\G_n\to\G_{n-2}.
\end{equation}
\end{lem}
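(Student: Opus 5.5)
The plan is to verify the two axioms of \cref{def:simplicial} directly from the formula \eqref{eq:nerve-structure}, using the partial-product identities \eqref{eq:partial-products}, and then to obtain \eqref{eq:face-identity} as a formal consequence of functoriality together with \cref{lem:coface}.

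First, continuity of each \(\theta^*\colon\G_n\to\G_m\). Each coordinate \(\gamma_{a,b}\) is either an iterated product \(\gamma_{a+1}\cdots\gamma_b\) or a unit \(x_a(g)\), which equals \(r(\gamma_1)\) or \(s(\gamma_a)\). Composition is continuous on \(\G^{(2)}\). The map \(g\mapsto(\gamma_{a+1}\cdots\gamma_{b-1},\gamma_b)\) lands in \(\G^{(2)}\) by \eqref{eq:partial-products}. So induction on \(b-a\) shows that \(g\mapsto\gamma_{a,b}\) is continuous \(\G_n\to\G\). The map \(\theta^*\) is therefore continuous into \(\G^m\) and lands in \(\G_m\), as noted after \eqref{eq:nerve-structure}. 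The case \(m=0\) is just \(x_{\theta(0)}\), which is continuous. For the identity axiom, \((\id_{[n]})^*(g)=(\gamma_{0,1},\dots,\gamma_{n-1,n})=(\gamma_1,\dots,\gamma_n)=g\), and for \(n=0\) it returns \(x_0(g)=x\).

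The main step is the composition axiom \((\theta\circ\theta')^*=\theta'^*\circ\theta^*\). Put \(h\coloneqq\theta^*(g)=(\eta_1,\dots,\eta_m)\) with \(\eta_j=\gamma_{\theta(j-1),\theta(j)}\). The key auxiliary claim is
\[
\eta_{c,d}=\gamma_{\theta(c),\theta(d)}\quad\text{and}\quad x_c(h)=x_{\theta(c)}(g)\qquad\text{for }0\le c\le d\le m.
\]
The unit statement follows from \eqref{eq:partial-products}: \(x_c(h)=s(\eta_c)=x_{\theta(c)}(g)\) for \(c\ge1\), and \(x_0(h)=r(\eta_1)=x_{\theta(0)}(g)\), with the case \(m=0\) immediate. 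The product statement then follows by induction on \(d-c\). The base case \(d=c\) is the unit statement. For the inductive step, \(\eta_{c,d+1}=\eta_{c,d}\eta_{d+1}=\gamma_{\theta(c),\theta(d)}\gamma_{\theta(d),\theta(d+1)}=\gamma_{\theta(c),\theta(d+1)}\) by the concatenation law in \eqref{eq:partial-products}, which uses monotonicity of \(\theta\). Here one has to be careful that the recursive description of \(\eta_{c,d}\) as a product agrees with the definition even when some \(\eta_j\) are units, that is, when \(\theta\) repeats values. The unit laws handle this, but it is the point I expect to need the most bookkeeping. Applying the claim with \(c=\theta'(k-1)\) and \(d=\theta'(k)\) gives \(\theta'^*(h)_k=\gamma_{\theta\theta'(k-1),\theta\theta'(k)}=(\theta\circ\theta')^*(g)_k\). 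When \(l=0\), both sides equal \(x_{\theta\theta'(0)}(g)\). When \(m=0\), so that \(h\) is a unit, \(\theta'\) is constant and both sides are the degenerate string of units \(x_{\theta(0)}(g)\). This last case follows since \(\gamma_{a,a}=x_a(g)\).

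Finally, for \(n\ge2\) and \(i<j\), functoriality gives \(d_i\circ d_j=(\delta^{n-1}_i)^*(\delta^n_j)^*=(\delta^n_j\circ\delta^{n-1}_i)^*\). By \cref{lem:coface} this equals \((\delta^n_i\circ\delta^{n-1}_{j-1})^*=(\delta^{n-1}_{j-1})^*(\delta^n_i)^*=d_{j-1}\circ d_i\), which is \eqref{eq:face-identity}.
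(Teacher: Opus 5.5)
Your proposal is correct and follows the paper's proof step for step: continuity of \(\theta^*\) by induction on the length of partial products, functoriality via the identity \(\eta_{a,b}=\gamma_{\theta(a),\theta(b)}\) (your induction on \(d-c\) is the paper's telescoping argument written out explicitly), and the face identity from functoriality together with \cref{lem:coface}. The extra bookkeeping you anticipate when \(\theta\) repeats values is not needed, because applying \eqref{eq:partial-products} to the composable string \(h\) itself already covers entries of \(h\) that are units.
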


\begin{proof}\keepwithnext
\begin{itemize}
\item \step{Continuity of \(\theta^*\).}
For \(1\le k\le n\) the iterated multiplication \(\mu_k\colon\G_k\to\G\) with \(\mu_k(\gamma_1,\dots,\gamma_k)\coloneqq\gamma_1\cdots\gamma_k\) is continuous by induction on \(k\): the map \(\mu_1\) is the inclusion \(\G_1=\G\), and \(g\mapsto\bigl(\mu_{k-1}(\gamma_1,\dots,\gamma_{k-1}),\gamma_k\bigr)\) is a continuous map into \(\G^{(2)}\), because \(s(\gamma_1\cdots\gamma_{k-1})=s(\gamma_{k-1})=r(\gamma_k)\), so composing with the continuous multiplication \(\G^{(2)}\to\G\) gives \(\mu_k\).
Every component of \(\theta^*\) is a \(\mu_k\) applied to a subtuple of consecutive entries, or a unit \(x_a(g)\in\{r(\gamma_{a+1}),s(\gamma_a)\}\), or, for \(n=0\), the point \(g\) itself, hence continuous.
A map into the subspace \(\G_m\subseteq\G^m\) is continuous as soon as its components are.
\item \step{Functoriality.}
One has \((\id_{[n]})^*=\id\) since \(\gamma_{k-1,k}=\gamma_k\).
Let \(\theta'\colon[l]\to[m]\) and \(\theta\colon[m]\to[n]\) be monotone, \(g\in\G_n\), and \(h\coloneqq\theta^*(g)\).
The units of \(h\) are \(x_k(h)=x_{\theta(k)}(g)\): for \(m=0\) this reads \(x_0(h)=h=x_{\theta(0)}(g)\) by definition, and for \(m\ge 1\) one has \(x_0(h)=r(\gamma_{\theta(0),\theta(1)})=x_{\theta(0)}(g)\) and \(x_k(h)=s(\gamma_{\theta(k-1),\theta(k)})=x_{\theta(k)}(g)\) for \(1\le k\le m\).
Writing \(\eta_{a,b}\) for the partial products formed from \(h\), the identity \(\gamma_{a,b}\gamma_{b,c}=\gamma_{a,c}\) telescopes to
\begin{equation}\label{eq:eta-telescope}
\begin{aligned}
\eta_{a,b}
&=\gamma_{\theta(a),\theta(a+1)}\cdots\gamma_{\theta(b-1),\theta(b)}&&\quad\text{by \eqref{eq:nerve-structure}}\\
&=\gamma_{\theta(a),\theta(b)},&&\quad\text{by \eqref{eq:partial-products}}
\end{aligned}
\end{equation}
including the case \(a=b\), where \(\eta_{a,a}=x_a(h)=x_{\theta(a)}(g)=\gamma_{\theta(a),\theta(a)}\).
Hence
\begin{equation}\label{eq:nerve-functorial}
\begin{aligned}
\theta'^*(\theta^*(g))
&=\bigl(\eta_{\theta'(j-1),\theta'(j)}\bigr)_{1\le j\le l}&&\quad\text{by \eqref{eq:nerve-structure}}\\
&=\bigl(\gamma_{\theta\theta'(j-1),\theta\theta'(j)}\bigr)_{1\le j\le l}&&\quad\text{by \eqref{eq:eta-telescope}}\\
&=(\theta\circ\theta')^*(g),&&\quad\text{by \eqref{eq:nerve-structure}}
\end{aligned}
\end{equation}
and for \(l=0\) likewise \(\theta'^*(h)=x_{\theta'(0)}(h)=x_{\theta\theta'(0)}(g)=(\theta\circ\theta')^*(g)\).
\item \step{Face identity.}
By functoriality and \cref{lem:coface},
\begin{equation}\label{eq:face-identity-proof}
\begin{aligned}
d_i\circ d_j
&=(\delta^{n}_j\circ\delta^{n-1}_i)^*&&\quad\text{by \eqref{eq:nerve-functorial}}\\
&=(\delta^{n}_i\circ\delta^{n-1}_{j-1})^*&&\quad\text{by \eqref{eq:coface-identity}}\\
&=d_{j-1}\circ d_i,&&\quad\text{by \eqref{eq:nerve-functorial}}
\end{aligned}
\end{equation}
which is \eqref{eq:face-identity}.\qedhere
\end{itemize}
\end{proof}

\begin{lem}\label{lem:nervefacts}
Let \(\G\) be an ample groupoid. Then:
\begin{enumerate}
\item\label{it:nerve-lch} for every \(n\ge 0\) the space \(\G_n\) is locally compact, Hausdorff and totally disconnected,
\item\label{it:nerve-face} for every \(n\ge 1\) and \(0\le i\le n\) the face map \(d_i\colon\G_n\to\G_{n-1}\) is a local homeomorphism.
\end{enumerate}
\end{lem}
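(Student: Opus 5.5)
For \cref{it:nerve-lch}, I plan to argue by induction on \(n\), using that \(\G_n\) is a closed subspace of a fiber product. The case \(n=0\) is \(X=\G^{(0)}\). It is Hausdorff and locally compact as a subspace of \(\G\) that is open. Indeed \(\G^{(0)}\) is open in \(\G\), because \(r\) is a local homeomorphism: I would check this by taking a bisection \(U\ni x\) around a unit and noting that \(U\cap r^{-1}(\cdot)\) and the section \(x\mapsto x\) agree near \(x\). If that is awkward, closedness also suffices: \(\G^{(0)}=\{\gamma\mid \gamma=r(\gamma)\}\) is closed by the Hausdorff property, and closed subspaces of locally compact Hausdorff spaces are locally compact. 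Total disconnectedness of \(X\) follows from the basis of compact open sets. For \(n\ge1\), the set \(\G_n\) is the subset of \(\G^n\) cut out by the equations \(s(\gamma_i)=r(\gamma_{i+1})\). Since \(X\) is Hausdorff, these equations define a closed subset of the locally compact Hausdorff space \(\G^n\), so \(\G_n\) is locally compact Hausdorff.

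For total disconnectedness of \(\G\), I plan to show that \(\G\) has a basis of compact open sets. Given \(\gamma\) and an open neighbourhood \(W\), I choose a bisection \(U\subseteq W\) containing \(\gamma\), then a compact open \(K\subseteq r(U)\) containing \(r(\gamma)\). Then \((\restr{r}{U})^{-1}(K)\) is compact open and contains \(\gamma\). It follows that \(\G^n\) has a basis of products of compact open sets, and the intersections of these with \(\G_n\) are compact open in \(\G_n\), since \(\G_n\) is closed. A Hausdorff space with a basis of clopen sets is totally disconnected. This gives \cref{it:nerve-lch}.

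For \cref{it:nerve-face}, the case \(n=1\) is immediate: \(d_0=s\) and \(d_1=r\) are local homeomorphisms, as noted after \cref{def:etale}. For \(n\ge2\) I would produce explicit local inverses. For \(d_n\), fix \(g=(\gamma_1,\dots,\gamma_n)\) and a bisection \(V\ni\gamma_n\). On the open set \(\{h\in\G_{n-1}\mid x_{n-1}(h)\in r(V)\}\), the map \(h\mapsto\bigl(h,(\restr{r}{V})^{-1}(x_{n-1}(h))\bigr)\) is continuous. It inverts \(d_n\) on the open set \(\G_n\cap(\G^{n-1}\times V)\). The case \(d_0\) is symmetric, using \(\restr{s}{V}\) for a bisection \(V\ni\gamma_1\).

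For \(0<i<n\), choose bisections \(U\ni\gamma_i\) and \(V\ni\gamma_{i+1}\). The local inverse sends \((\eta_1,\dots,\eta_{n-1})\) to the tuple with \(\eta_i\) replaced by \(\eta_i\bigl((\restr{r}{V})^{-1}(\dots)\bigr)^{-1}\) and \((\restr{r}{V})^{-1}\bigl(\cdots\bigr)\). Here the second arrow is determined so that its source is \(x_i(\eta)\). Concretely, I set \(\beta=(\restr{s}{V})^{-1}(s(\eta_i))\) and \(\alpha=\eta_i\beta^{-1}\), and require \(\alpha\in U\). This defines a continuous map on an open neighbourhood of \(d_i(g)\). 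It is inverse to \(d_i\) on the open set of strings with \(i\)-th entry in \(U\) and \((i{+}1)\)-th entry in \(V\), intersected with the preimage of that neighbourhood. The main obstacle is this bookkeeping of domains, and in particular the verification that these sets are open and the two maps are mutually inverse. I expect it to reduce to continuity of multiplication and inversion and to injectivity of \(\restr{s}{V}\).
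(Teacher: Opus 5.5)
Your proposal is correct and follows the paper's proof. The shared ingredients are a compact open basis of $\G$ obtained by pulling back a compact open subset of $r(U)$ along $(\restr{r}{U})^{-1}$, closedness of $\G_n$ in $\G^n$, and explicit continuous local inverses for the face maps built from bisections. The differences are only mirror images: for inner faces you recover the second factor as $\beta=(\restr{s}{V})^{-1}(s(\eta_i))$ and impose $\eta_i\beta^{-1}\in U$, while the paper recovers the first factor via $(\restr{r}{U})^{-1}$ and imposes $\lambda(\eta_i)^{-1}\eta_i\in V$. Likewise you treat $d_n$ explicitly and $d_0$ by symmetry, the paper the reverse. There are three small cleanups. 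Your earlier mention of $(\restr{r}{V})^{-1}$ should read $(\restr{s}{V})^{-1}$, as in your concrete formula. The extra intersection with the preimage of the neighbourhood is unnecessary, since any string with $i$-th entry in $U$ and $(i{+}1)$-st entry in $V$ automatically gives $\beta=\gamma_{i+1}$ and $\alpha=\gamma_i\in U$. Finally, for $n=0$ local compactness of $\G^{(0)}$ already follows from its compact open basis, so the openness argument is not needed.
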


\begin{proof}\keepwithnext
\begin{itemize}
\item \step{\(\G\) has a basis of compact open sets.}
Let \(\gamma\in\G\) and let \(\Omega\subseteq\G\) be an open neighborhood of \(\gamma\).
Choose an open \(U\subseteq\Omega\) with \(\gamma\in U\) such that \(\restr{r}{U}\) is a homeomorphism onto the open set \(r(U)\subseteq\G^{(0)}\) (\cref{def:etale}).
Since \(\G\) is ample, there is a compact open \(V\subseteq\G^{(0)}\) with \(r(\gamma)\in V\subseteq r(U)\).
Then \((\restr{r}{U})^{-1}(V)\) is a compact open neighborhood of \(\gamma\) contained in \(\Omega\).
\item \step{\(\G\) is totally disconnected.}
Let \(\gamma\neq\eta\) in \(\G\).
As \(\G\) is Hausdorff, there is an open \(\Omega\ni\gamma\) with \(\eta\notin\Omega\), and inside it a compact open \(A\) with \(\gamma\in A\subseteq\Omega\).
Compact subsets of Hausdorff spaces are closed, so \(A\) is clopen.
Every \(S\subseteq\G\) containing \(\gamma\) and \(\eta\) is disconnected by \(S\cap A\) and \(S\setminus A\), so the connected components of \(\G\) are singletons.
\item \step{The spaces \(\G_n\).}
The product \(\G^n\) is locally compact Hausdorff, being a finite product of such spaces, and totally disconnected, since the components of a product are the products of components.
The subset \(\G_n\subseteq\G^n\) is closed: it is the intersection of the sets \(\{g\in\G^n\mid s(\gamma_i)=r(\gamma_{i+1})\}\) for \(1\le i\le n-1\), each the preimage of the closed diagonal of \(\G^{(0)}\times\G^{(0)}\) under the continuous map \(g\mapsto(s(\gamma_i),r(\gamma_{i+1}))\).
A closed subspace of a locally compact Hausdorff space is locally compact Hausdorff, and every subspace of a totally disconnected space is totally disconnected.
The space \(\G_0=\G^{(0)}\) is a subspace of \(\G\), hence Hausdorff and totally disconnected, and it is locally compact because it has a basis of compact open sets.
\item \step{Outer faces.}
For \(n=1\), \(d_0=s\) and \(d_1=r\) are local homeomorphisms by \cref{def:etale} and the paragraph following it.
Let \(n\ge 2\) and \(g=(\gamma_1,\dots,\gamma_n)\in\G_n\), and choose a bisection \(U\ni\gamma_1\).
Then \(\mathcal U\coloneqq(U\times\G^{n-1})\cap\G_n\) is an open neighborhood of \(g\) and \(\mathcal V\coloneqq\{(\eta_1,\dots,\eta_{n-1})\in\G_{n-1}\mid r(\eta_1)\in s(U)\}\) is open in \(\G_{n-1}\).
The map \(\psi\colon\mathcal V\to\mathcal U\) given by \(\psi(\eta_1,\dots,\eta_{n-1})\coloneqq\bigl((\restr{s}{U})^{-1}(r(\eta_1)),\eta_1,\dots,\eta_{n-1}\bigr)\) is well defined and continuous, and \(d_0(\mathcal U)\subseteq\mathcal V\) because \(r(\gamma_2)=s(\gamma_1)\in s(U)\).
Moreover \(d_0\circ\psi=\id_{\mathcal V}\), and \(\psi\circ\restr{d_0}{\mathcal U}=\id_{\mathcal U}\) since \(\restr{s}{U}\) is injective and \(\gamma_1\in U\).
Hence \(\restr{d_0}{\mathcal U}\colon\mathcal U\to\mathcal V\) is a homeomorphism onto an open set, so \(d_0\) is a local homeomorphism.
The face \(d_n\) is treated symmetrically, with a bisection \(U'\ni\gamma_n\) and the entry \((\restr{r}{U'})^{-1}(s(\eta_{n-1}))\) appended.
\item \step{Inner faces.}
Let \(n\ge 2\) and \(0<i<n\), and choose bisections \(U\ni\gamma_i\) and \(V\ni\gamma_{i+1}\).
On the open set \(r^{-1}(r(U))\subseteq\G\) define the continuous maps \(\lambda(\zeta)\coloneqq(\restr{r}{U})^{-1}(r(\zeta))\in U\) and \(\Phi(\zeta)\coloneqq\lambda(\zeta)^{-1}\zeta\), the latter defined since \(s(\lambda(\zeta)^{-1})=r(\lambda(\zeta))=r(\zeta)\).
Set \(UV\coloneqq\Phi^{-1}(V)\), an open subset of \(\G\).
Every \(\zeta\in UV\) factors as \(\zeta=\lambda(\zeta)\,\Phi(\zeta)\) with \(\lambda(\zeta)\in U\) and \(\Phi(\zeta)\in V\), since
\begin{equation}\label{eq:inner-factor}
\begin{aligned}
\lambda(\zeta)\bigl(\lambda(\zeta)^{-1}\zeta\bigr)
&=\bigl(\lambda(\zeta)\lambda(\zeta)^{-1}\bigr)\zeta&&\quad\text{associativity}\\
&=r(\zeta)\,\zeta&&\quad\text{\(r(\lambda(\zeta))=r(\zeta)\)}\\
&=\zeta,&&\quad\text{unit law}
\end{aligned}
\end{equation}
and the factorization into an element of \(U\) times an element of \(V\) is unique: \(\zeta=\gamma\eta\) with \(\gamma\in U\) and \(\eta\in V\) forces \(r(\gamma)=r(\zeta)\), hence \(\gamma=\lambda(\zeta)\) by injectivity of \(\restr{r}{U}\), and then \(\eta=\gamma^{-1}\zeta=\Phi(\zeta)\).
Now \(\mathcal U\coloneqq(\G^{i-1}\times U\times V\times\G^{n-i-1})\cap\G_n\) is an open neighborhood of \(g\), and \(\mathcal V\coloneqq\{(\eta_1,\dots,\eta_{n-1})\in\G_{n-1}\mid\eta_i\in UV\}\) is open in \(\G_{n-1}\).
For \(h=(\eta_1,\dots,\eta_{n-1})\in\mathcal V\) the string
\begin{equation}\label{eq:inner-psi}
\psi(h)\coloneqq(\eta_1,\dots,\eta_{i-1},\lambda(\eta_i),\Phi(\eta_i),\eta_{i+1},\dots,\eta_{n-1})
\end{equation}
is composable, since \(s(\eta_{i-1})=r(\eta_i)=r(\lambda(\eta_i))\), \(s(\lambda(\eta_i))=r(\lambda(\eta_i)^{-1})=r(\Phi(\eta_i))\) and \(s(\Phi(\eta_i))=s(\eta_i)=r(\eta_{i+1})\), the first condition being vacuous for \(i=1\) and the last for \(i=n-1\), and \(\psi\) is continuous.
Then \(d_i(\mathcal U)\subseteq\mathcal V\), because \(\lambda(\gamma_i\gamma_{i+1})=\gamma_i\) and \(\Phi(\gamma_i\gamma_{i+1})=\gamma_i^{-1}\gamma_i\gamma_{i+1}=\gamma_{i+1}\in V\).
The same computation gives \(\psi\circ\restr{d_i}{\mathcal U}=\id_{\mathcal U}\), and \(d_i\circ\psi=\id_{\mathcal V}\) by the factorization \(\lambda(\eta_i)\,\Phi(\eta_i)=\eta_i\).
Hence \(\restr{d_i}{\mathcal U}\colon\mathcal U\to\mathcal V\) is a homeomorphism onto an open set.\qedhere
\end{itemize}
\end{proof}

\subhead{Pushforward}
The chains of the Moore complex are compactly supported locally constant functions, and its differential moves them along the face maps.
We now define that operation and establish its basic properties.

\begin{defi}\label{def:Cc}
Let \(Y\) be locally compact Hausdorff and totally disconnected.
Set
\begin{equation}\label{eq:Cc}
C_c(Y,\Z)\coloneqq\{f\colon Y\to\Z\mid f\ \text{locally constant},\ \supp(f)\ \text{compact}\}.
\end{equation}
Since \(f\) is locally constant and \(\Z\) is discrete, \(\{f\neq0\}=f^{-1}(\Z\setminus\{0\})\) is clopen, so \(\supp(f)=\{f\neq0\}\) is a \emph{compact open} subset of \(Y\).

If \(Z\) is locally compact Hausdorff and totally disconnected, \(p\colon Y\to Z\) is a local homeomorphism and \(f\in C_c(Y,\Z)\), define \(p_*f\colon Z\to\Z\) by
\begin{equation}\label{eq:pushforward}
(p_*f)(z)\coloneqq\sum_{y\in p^{-1}(z)}f(y).
\end{equation}
The sum is finite: \(p^{-1}(z)\) is closed (as \(Z\) is Hausdorff and \(p\) continuous) and discrete (as \(p\) is a local homeomorphism), so \(\supp(f)\cap p^{-1}(z)\) is a compact discrete set, hence finite, and only its points contribute.
\end{defi}

\begin{lem}\label{lem:pushforward}
Let \(Y\), \(Z\) and \(W\) be locally compact Hausdorff and totally disconnected, and let \(p\colon Y\to Z\) be a local homeomorphism. Then:
\begin{enumerate}
\item\label{it:push-cc} \(p_*f\in C_c(Z,\Z)\) for every \(f\in C_c(Y,\Z)\),
\item\label{it:push-funct} if \(q\colon Z\to W\) is a local homeomorphism, then \((q\circ p)_*=q_*\,p_*\),
\item\label{it:push-homeo} if \(p\) is a homeomorphism, then \(p_*=(p^{-1})^*\), the pullback \(f\mapsto f\circ p^{-1}\).
\end{enumerate}
\end{lem}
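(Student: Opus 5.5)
The plan is to prove the three items in the order stated, using one local structure result. Because \(p\) is a local homeomorphism and \(Y\) has a basis of compact open sets, every compact open \(K\subseteq Y\) is a finite disjoint union of compact open sets \(K_j\) on which \(p\) restricts to a homeomorphism onto a compact open subset of \(Z\). To obtain this, cover \(K\) by open sets on which \(p\) is injective and open. Refine them to compact open sets, which is possible because \(Y\) has a basis of compact open sets: the argument in the first step of the proof of \cref{lem:nervefacts} applies verbatim to any locally compact Hausdorff totally disconnected space, since in such a space compact open sets form a basis. Extract a finite subcover and disjointify by successive set differences; differences of compact open sets in a Hausdorff space are again compact open. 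Every \(f\in C_c(Y,\Z)\) is then a finite \(\Z\)-combination \(f=\sum_j c_j\,\mathbf 1_{K_j}\) of indicators of such ``\(p\)-good'' compact open sets. To get this decomposition, refine along the finitely many clopen level sets \(\{f=c\}\cap\supp(f)\).

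For \cref{it:push-cc}, note that \(p_*\) is additive, so it suffices to treat a single indicator \(\mathbf 1_K\) with \(\restr pK\) a homeomorphism onto a compact open \(p(K)\). Then \(p_*\mathbf 1_K=\mathbf 1_{p(K)}\), because each fibre meets \(K\) in at most one point. The right-hand side is locally constant with compact support, since \(p(K)\) is compact, hence closed, and open. This step is the only real obstacle: one must check that the decomposition above exists, in particular that the image of a compact open set on which \(p\) is injective is open, which holds because \(p\) is an open map. One should also note that \(p_*f\) is well defined pointwise by the finiteness remark in \cref{def:Cc}.

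For \cref{it:push-funct}, use that \(q\circ p\) is a local homeomorphism, being a composite of local homeomorphisms, so \((q\circ p)_*\) is defined, and by \cref{it:push-cc} so is \(q_*(p_*f)\). The identity then follows by regrouping finite sums. For \(w\in W\), the fibre \((q\circ p)^{-1}(w)\) is the disjoint union over \(z\in q^{-1}(w)\) of \(p^{-1}(z)\), so
\[
((q\circ p)_*f)(w)=\sum_{z\in q^{-1}(w)}\ \sum_{y\in p^{-1}(z)}f(y)=(q_*p_*f)(w).
\]
Only finitely many terms are nonzero, namely those with \(y\in\supp(f)\), so the rearrangement is legitimate.

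For \cref{it:push-homeo}, each fibre \(p^{-1}(z)\) is the singleton \(\{p^{-1}(z)\}\), so \((p_*f)(z)=f(p^{-1}(z))\), which is \(f\circ p^{-1}\). This needs no further argument.
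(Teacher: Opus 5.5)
Your second and third items are the paper's argument. For the first item you take a different route. The paper argues pointwise. For fixed \(z\) it places the finitely many points of \(\supp(f)\cap p^{-1}(z)\) in disjoint charts \(V_j\) on which \(f\) is constant, and sets \(K'=\supp(f)\setminus\bigcup_j V_j\). It then shows that \(p_*f\) is constant on the neighborhood \((Z\setminus p(K'))\cap\bigcap_j p(V_j)\) of \(z\), and gets compact support from \(\{p_*f\neq0\}\subseteq p(\supp f)\). You instead decompose \(f=\sum_j c_j\mathbf{1}_{K_j}\) with each \(K_j\) compact open and \(\restr{p}{K_j}\) injective, and use \(p_*\mathbf{1}_{K_j}=\mathbf{1}_{p(K_j)}\) together with additivity.

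What your version buys is an explicit formula on generators, the bisection-style description of the pushforward, which makes local constancy and compact support immediate. What the paper's version buys is self-containedness and generality. It uses only that \(Z\) is Hausdorff, that \(p\) is a local homeomorphism and that \(\supp(f)\) is compact open, and it never uses total disconnectedness.

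Your route does use total disconnectedness, essentially, through the existence of a basis of compact open sets in \(Y\). Without such a basis, a compact set need not split into finitely many clopen pieces subordinate to a given open cover. Your justification of this basis is circular. The first step of the proof of \cref{lem:nervefacts} transports a compact open basis from \(\G^{(0)}\) to \(\G\) along \(r\). It presupposes such a basis on the target rather than producing one from total disconnectedness. Your clause ``since in such a space compact open sets form a basis'' is the claim itself.

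The fact is true, but it is a genuine theorem, resting on the coincidence of components and quasi-components in compact Hausdorff spaces. Here is a proof you can insert. Given \(x\) in an open \(U\), choose an open \(V\ni x\) with \(\overline V\) compact and contained in \(U\). In the compact Hausdorff, totally disconnected space \(\overline V\), the quasi-component of \(x\) is \(\{x\}\). So each point of the compact set \(\overline V\setminus V\) is separated from \(x\) by a clopen subset of \(\overline V\). Finitely many of these intersect to a clopen \(C\subseteq\overline V\) with \(x\in C\subseteq V\), and this \(C\) is compact and open in \(Y\).

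With this cited or inserted, your proof is complete. The rest of your decomposition is fine: the finite subcover, the disjointification by differences of compact open sets, and the refinement along the finitely many level sets of \(f\).
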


\begin{proof}
Fix \(f\in C_c(Y,\Z)\) and put \(K\coloneqq\supp(f)\), a compact open set with \(\restr{f}{Y\setminus K}=0\).
\begin{itemize}
\item \step{Local constancy of \(p_*f\).}
Fix \(z\in Z\) and enumerate the (finite) fiber of the support,
\begin{equation}\label{eq:fiber-enum}
K\cap p^{-1}(z)=\{y_1,\dots,y_m\},\qquad m\ge 0.
\end{equation}
Because \(p\) is a local homeomorphism, \(f\) is locally constant, and \(Y\) is Hausdorff, we may choose \emph{pairwise disjoint} open sets \(V_1,\dots,V_m\) with \(y_j\in V_j\) such that for each \(j\) the restriction \(\restr{p}{V_j}\colon V_j\to p(V_j)\) is a homeomorphism onto an open set and \(\restr{f}{V_j}\equiv f(y_j)\).
Indeed, a local-homeomorphism chart at \(y_j\) gives the first condition on a neighborhood, intersecting with the clopen set on which \(f\) is constantly \(f(y_j)\) gives the second, and Hausdorffness lets us shrink the finitely many \(V_j\) to be disjoint.
Set \(K'\coloneqq K\setminus(V_1\cup\dots\cup V_m)\).
This is closed in \(K\), hence compact, and contains no point of \(p^{-1}(z)\), since every point of \(K\) over \(z\) is some \(y_j\in V_j\).
Thus \(z\notin p(K')\), and \(p(K')\) is compact, hence closed in \(Z\).
Consequently
\begin{equation}\label{eq:Wprime}
W'\coloneqq\bigl(Z\setminus p(K')\bigr)\cap\bigcap_{j=1}^m p(V_j)
\end{equation}
is an open neighborhood of \(z\) (each \(p(V_j)\ni z\) is open, and for \(m=0\) the empty intersection is read as \(Z\)).
We claim \(p_*f\equiv(p_*f)(z)\) on \(W'\).
Fix \(z'\in W'\).
For each \(j\), since \(z'\in p(V_j)\) and \(\restr{p}{V_j}\) is bijective onto \(p(V_j)\), there is a unique \(y_j'\in V_j\) with \(p(y_j')=z'\), and \(f(y_j')=f(y_j)\).
The \(y_j'\) are distinct as the \(V_j\) are disjoint.
Every \(y\in p^{-1}(z')\) with \(f(y)\neq0\) lies in \(K\), and since \(z'\notin p(K')\) it cannot lie in \(K'\), so it lies in some \(V_j\) and hence equals \(y_j'\).
Therefore
\begin{equation}\label{eq:pf-locally-constant}
\begin{aligned}
(p_*f)(z')
&=\sum_{y\in p^{-1}(z')}f(y)&&\quad\text{by \eqref{eq:pushforward}}\\
&=\sum_{j=1}^m f(y_j')&&\quad\text{the nonzero terms sit at the \(y_j'\)}\\
&=\sum_{j=1}^m f(y_j)&&\quad\text{\(f(y_j')=f(y_j)\)}\\
&=(p_*f)(z).&&\quad\text{by \eqref{eq:pushforward}}
\end{aligned}
\end{equation}
As \(z\in Z\) was arbitrary, \(p_*f\) is locally constant.
\item \step{Compact support of \(p_*f\).}
If \(z\notin p(K)\), then \(p^{-1}(z)\cap K=\varnothing\) and \((p_*f)(z)=0\), so \(\{p_*f\neq0\}\subseteq p(K)\).
Since \(p(K)\) is compact, hence closed, we get \(\supp(p_*f)\subseteq p(K)\), so \(\supp(p_*f)\) is compact.
With local constancy this gives \(p_*f\in C_c(Z,\Z)\).
\item \step{Functoriality.}
Let \(q\colon Z\to W\) be a local homeomorphism.
Then \(q\circ p\) is a local homeomorphism (shrink a chart of \(p\) at each point until its image lies inside a chart of \(q\)), so \((q\circ p)_*\) is defined.
Fix \(w\in W\).
The fibers \(\{p^{-1}(z)\}_{z\in q^{-1}(w)}\) partition \((q\circ p)^{-1}(w)=p^{-1}\bigl(q^{-1}(w)\bigr)\), and \(K\cap(q\circ p)^{-1}(w)\) is finite by the argument of \cref{def:Cc} applied to \(q\circ p\).
Hence the rearrangement
\begin{equation}\label{eq:pf-functorial}
\begin{aligned}
\bigl((q\circ p)_*f\bigr)(w)
&=\sum_{y\in(q\circ p)^{-1}(w)}f(y)&&\quad\text{by \eqref{eq:pushforward}}\\
&=\sum_{z\in q^{-1}(w)}\ \sum_{y\in p^{-1}(z)}f(y)&&\quad\text{partition of \((q\circ p)^{-1}(w)\)}\\
&=\sum_{z\in q^{-1}(w)}(p_*f)(z)&&\quad\text{by \eqref{eq:pushforward}}\\
&=\bigl(q_*(p_*f)\bigr)(w)&&\quad\text{by \eqref{eq:pushforward}}
\end{aligned}
\end{equation}
is a finite reindexing, valid termwise.
Thus \((q\circ p)_*=q_*\,p_*\).
\item \step{Homeomorphisms.}
If \(p\) is a homeomorphism, \(p^{-1}(z)\) is the singleton \(\{p^{-1}(z)\}\), so \((p_*f)(z)=f\bigl(p^{-1}(z)\bigr)=\bigl((p^{-1})^*f\bigr)(z)\). Thus \(p_*=(p^{-1})^*\).\qedhere
\end{itemize}
\end{proof}

\subhead{The Moore Complex}

\begin{defi}\label{def:moore}
Let \(\G\) be an ample groupoid.
By \cref{lem:nervefacts}\labelcref{it:nerve-lch} each \(\G_n\) is locally compact Hausdorff and totally disconnected, so \cref{def:Cc} applies to it, and by \cref{lem:nervefacts}\labelcref{it:nerve-face} and \cref{lem:pushforward}\labelcref{it:push-cc} each face map induces a pushforward \((d_i)_*\colon C_c(\G_n,\Z)\to C_c(\G_{n-1},\Z)\).
The \term{Moore complex} of \(\G\) is
\begin{equation}\label{eq:moore-complex}
\begin{aligned}
C_n(\G;\Z)&\coloneqq C_c(\G_n,\Z),\\
\partial_n&\coloneqq\sum_{i=0}^n(-1)^i(d_i)_*\colon C_n(\G;\Z)\to C_{n-1}(\G;\Z)\quad(n\ge 1),
\qquad
\partial_0\coloneqq 0,
\end{aligned}
\end{equation}
where \(C_{-1}(\G;\Z)\coloneqq 0\).
\end{defi}

In words, the Moore complex is the unnormalized chain complex of the simplicial abelian group \(n\mapsto C_c(\G_n,\Z)\): its groups are the compactly supported locally constant chains on the nerve, and its differential is the alternating sum of the face pushforwards.

\begin{lem}\label{lem:boundary}
For every ample groupoid \(\G\) and every \(n\ge 1\) one has \(\partial_{n-1}\circ\partial_n=0\).
In particular, the Moore complex is a chain complex.
\end{lem}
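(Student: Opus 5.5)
The plan is the standard simplicial argument, transported to pushforwards through functoriality. Expanding the composite gives
\[
\partial_{n-1}\circ\partial_n=\sum_{i=0}^{n-1}\sum_{j=0}^{n}(-1)^{i+j}(d_i)_*(d_j)_*,
\]
where the outer faces \(d_i\colon\G_{n-1}\to\G_{n-2}\) and the inner ones \(d_j\colon\G_n\to\G_{n-1}\). For \(n=1\) there is nothing to prove, since \(\partial_0=0\). So assume \(n\ge 2\).

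First, every term can be rewritten as a single pushforward. By \cref{lem:nervefacts}\labelcref{it:nerve-face} all face maps are local homeomorphisms, and by \cref{lem:nervefacts}\labelcref{it:nerve-lch} all \(\G_k\) satisfy the hypotheses of \cref{lem:pushforward}. Therefore \cref{lem:pushforward}\labelcref{it:push-funct} gives \((d_i)_*(d_j)_*=(d_i\circ d_j)_*\). This identity is the only point where the topology enters; everything after it is combinatorics.

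Next, split the index set \(\{(i,j)\mid 0\le i\le n-1,\ 0\le j\le n\}\) into the pairs with \(i<j\) and those with \(i\ge j\). For \(i<j\), the face identity \eqref{eq:face-identity} of \cref{lem:nerve} gives \(d_i\circ d_j=d_{j-1}\circ d_i\). Hence \((d_i)_*(d_j)_*=(d_{j-1})_*(d_i)_*\), again by functoriality of the pushforward. Now consider the reindexing \((i,j)\mapsto(i',j')\coloneqq(j-1,i)\). It is a bijection from \(\{0\le i<j\le n\}\) onto \(\{0\le j'\le i'\le n-1\}\), with inverse \((i',j')\mapsto(j',i'+1)\). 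It also flips the sign, since \((-1)^{i'+j'}=(-1)^{i+j-1}\). So the sum over \(i<j\) equals the negative of the sum over \(i\ge j\), and the total vanishes.

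I expect no step to be a genuine obstacle. The only care needed is bookkeeping: the ranges of \(i\) and \(j\) must be correct (the outer \(i\) runs only to \(n-1\)), and the reindexing must be checked to be bijective onto exactly the complementary index set. The analytic content, namely that \((d_i)_*\) is well defined and that \((q\circ p)_*=q_*p_*\), has already been supplied by \cref{lem:pushforward}. Once \(\partial_{n-1}\circ\partial_n=0\) holds, the Moore complex is a chain complex by definition.
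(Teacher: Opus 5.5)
Your proof is correct and follows the paper's argument. You dispose of \(n=1\), use \cref{lem:pushforward}\labelcref{it:push-funct} to write each term as \((d_i\circ d_j)_*\), split into \(i<j\) and \(i\ge j\), and apply \eqref{eq:face-identity} to the first block. Your reindexing \((i,j)\mapsto(j-1,i)\) is the paper's \((a,b)=(i,j-1)\) with the letters swapped, so the two blocks cancel. One cosmetic point: calling \(d_i\) and \(d_j\) the ``outer'' and ``inner'' faces clashes with the paper's use of those words in \cref{lem:nervefacts} for \(d_0,d_n\) versus \(d_i\) with \(0<i<n\), so ``the face applied second'' and ``the face applied first'' would be unambiguous.
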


\begin{proof}
For \(n=1\) this holds because \(\partial_0=0\), so let \(n\ge 2\).
Functoriality (\cref{lem:pushforward}\labelcref{it:push-funct}) turns each composite of face pushforwards into the pushforward of a composite, and the face identity \eqref{eq:face-identity} governs those composites.
Concretely,
\begin{equation}\label{eq:boundary-square}
\begin{aligned}
\partial_{n-1}\partial_n
&=\sum_{i=0}^{n-1}\sum_{j=0}^{n}(-1)^{i+j}(d_i)_*(d_j)_*
&&\quad\text{by \eqref{eq:moore-complex}}\\
&=\sum_{i=0}^{n-1}\sum_{j=0}^{n}(-1)^{i+j}(d_i\circ d_j)_*.
&&\quad\text{by \cref{lem:pushforward}\labelcref{it:push-funct}}
\end{aligned}
\end{equation}
Split the index set into \(i<j\) and \(i\ge j\).
On the first block \(d_i\circ d_j=d_{j-1}\circ d_i\) by \eqref{eq:face-identity}, so reindexing by \((a,b)\coloneqq(i,j-1)\) rewrites it as
\begin{equation}\label{eq:boundary-first-block}
-\sum_{0\le a\le b\le n-1}(-1)^{a+b}(d_b\circ d_a)_*.
\end{equation}
The second block is exactly
\begin{equation}\label{eq:boundary-second-block}
+\sum_{0\le a\le b\le n-1}(-1)^{a+b}(d_b\circ d_a)_*.
\end{equation}
The two cancel, so \(\partial_{n-1}\partial_n=0\).
\end{proof}

\begin{defi}\label{def:homology}
The \term{groupoid homology} of an ample groupoid \(\G\) is
\begin{equation}\label{eq:homology}
H_n(\G;\Z)\coloneqq\frac{\ker(\partial_n)}{\im(\partial_{n+1})},
\qquad n\ge 0,
\end{equation}
the homology of its Moore complex.
\end{defi}

For ample groupoids this is the homology introduced by Matui \cite[Definition~3.1]{matui2012homology}.
It reframes, for ample groupoids, the compactly supported homology theory of Crainic and Moerdijk \cite[Section~3]{crainic2000homology}, as explained in \cite[Section~4]{farsi2019ample}.

\section{The Cantor Unit Groupoid and Moore Homology}\label{sec:moore}
Throughout this section \(X\coloneqq\{0,1\}^\N\) carries the product topology.
For \(n\in\N\) and \(\varepsilon\in\{0,1\}^n\) let the cylinder be
\begin{equation}\label{eq:cylinder}
[\varepsilon]\coloneqq\{x=(x_k)_{k\in\N}\in X\mid x_1=\varepsilon_1,\dots,x_n=\varepsilon_n\}.
\end{equation}
Each \([\varepsilon]\) is clopen, and the cylinders form a countable clopen basis of \(X\).

Let \(\G\coloneqq(X\rightrightarrows X)\) be the unit groupoid of \cref{ex:unit}\labelcref{it:ex-unit}, so the only arrows are units and \(r=s=\id_X\).
A composable \(n\)-string of units forces all base points to agree, whence
\begin{equation}\label{eq:unit-nerve}
\G_n=\{(x_1,\dots,x_n)\in X^n\mid x_1=\dots=x_n\}=\{(x,\dots,x)\mid x\in X\}
\end{equation}
for \(n\ge 1\) and \(\G_0=X\).
For \(n\ge 1\) let \(\iota_n\colon X\to\G_n\) and \(\rho_n\colon\G_n\to X\) be the mutually inverse homeomorphisms
\begin{equation}\label{eq:iota-rho}
\iota_n(x)=(x,\dots,x),\qquad
\rho_n(x,\dots,x)=x,\qquad
\rho_n\circ\iota_n=\id_X,
\end{equation}
and put \(\iota_0\coloneqq\rho_0\coloneqq\id_X\) (all are continuous: the components of \(\iota_n\) are \(\id_X\), and \(\rho_n\) is the restriction of a coordinate projection).
The outer faces \(d_0,d_n\) delete an end coordinate.
An inner face \(d_i\) (\(0<i<n\)) multiplies two units over the same point, which again deletes one coordinate.
Either way the output is the constant tuple over the same base point, so for all \(0\le i\le n\), including \(n=1\), where \(d_0=s\) and \(d_1=r\):
\begin{equation}\label{eq:unit-faces}
d_i=\iota_{n-1}\circ\rho_n.
\end{equation}

Since \(X\) is compact and \(\Z\) is discrete, every continuous map \(X\to\Z\) has finite image, hence is locally constant with compact support.
Thus \(C_c(\G_n,\Z)=C(\G_n,\Z)\), and pullback along the homeomorphism \(\rho_n\) furnishes the identifying isomorphism
\begin{equation}\label{eq:rho-iso}
\rho_n^*\colon C(X,\Z)\xrightarrow{\cong}C(\G_n,\Z),
\qquad
\rho_n^*f=f\circ\rho_n,
\qquad
(\rho_n^*)^{-1}=\iota_n^*.
\end{equation}

\begin{lem}\label{lem:faces}
For every \(n\ge 1\) and \(0\le i\le n\), the pushforward \((d_i)_*\colon C_n(\G;\Z)\to C_{n-1}(\G;\Z)\) becomes \(\id_{C(X,\Z)}\) under the identifications \(\rho_n^*,\rho_{n-1}^*\).
\end{lem}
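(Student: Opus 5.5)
We must show that \((\rho_{n-1}^*)^{-1}\circ(d_i)_*\circ\rho_n^*=\id_{C(X,\Z)}\), that is, \(\iota_{n-1}^*\,(d_i)_*\,\rho_n^*f=f\) for every \(f\in C(X,\Z)\). The plan is to reduce this entirely to the factorization \eqref{eq:unit-faces}, \(d_i=\iota_{n-1}\circ\rho_n\), together with \cref{lem:pushforward}.

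First we note that \(\rho_n\colon\G_n\to X\) and \(\iota_{n-1}\colon X\to\G_{n-1}\) are homeomorphisms by \eqref{eq:iota-rho}. In particular they are local homeomorphisms between locally compact Hausdorff totally disconnected spaces, by \cref{lem:nervefacts}\labelcref{it:nerve-lch}. Hence their pushforwards are defined, and \cref{lem:pushforward}\labelcref{it:push-funct} gives
\[
(d_i)_*=(\iota_{n-1})_*\,(\rho_n)_*.
\]
By \cref{lem:pushforward}\labelcref{it:push-homeo} we have \((\rho_n)_*=(\rho_n^{-1})^*=\iota_n^*\) and \((\iota_{n-1})_*=(\iota_{n-1}^{-1})^*=\rho_{n-1}^*\). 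For \(n-1=0\) both maps are \(\id_X\), so this case is included.

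Substituting, we get
\[
\iota_{n-1}^*(d_i)_*\rho_n^*f=\iota_{n-1}^*\rho_{n-1}^*\iota_n^*\rho_n^*f .
\]
The contravariance of pullback, combined with \(\rho_{n-1}\circ\iota_{n-1}=\id_X\) and \(\rho_n\circ\iota_n=\id_X\), turns each adjacent pair into the identity, so the expression equals \(f\).

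Alternatively, one can compute directly from \eqref{eq:pushforward}. The fiber of \(d_i\) over \(\iota_{n-1}(x)\) is the singleton \(\{\iota_n(x)\}\), so \(((d_i)_*\rho_n^*f)(\iota_{n-1}(x))=f(x)\).

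The only step needing care is checking that \eqref{eq:unit-faces} holds uniformly, including for \(n=1\) where \(d_0=s\) and \(d_1=r\), and that the degenerate identifications \(\iota_0=\rho_0=\id_X\) fit the formulas. This is bookkeeping rather than a real obstacle.
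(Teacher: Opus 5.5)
Your proof is correct and is essentially the paper's argument: factor \(d_i=\iota_{n-1}\circ\rho_n\) via \eqref{eq:unit-faces}, split \((d_i)_*\) by functoriality of pushforward, and rewrite each factor as a pullback along the inverse homeomorphism using \cref{lem:pushforward}. The only difference is cosmetic: you conjugate by \(\iota_{n-1}^*\) and cancel adjacent pullback pairs, whereas the paper checks \((d_i)_*(\rho_n^*f)=\rho_{n-1}^*f\) directly.
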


\begin{proof}
It suffices to track a generic \(f\in C(X,\Z)\), identified with \(\rho_n^*f\in C(\G_n,\Z)\), and to show \((d_i)_*(\rho_n^*f)=\rho_{n-1}^*f\).
From \eqref{eq:unit-faces} and functoriality (\cref{lem:pushforward}\labelcref{it:push-funct}), \((d_i)_*=(\iota_{n-1})_*\,(\rho_n)_*\).
Both factors are pushforwards along homeomorphisms, so \cref{lem:pushforward}\labelcref{it:push-homeo} reads them as precomposition with the inverse:
\begin{equation}\label{eq:unit-push-homeo}
(\rho_n)_*=(\rho_n^{-1})^*=\iota_n^*,
\qquad
(\iota_{n-1})_*=(\iota_{n-1}^{-1})^*=\rho_{n-1}^*.
\end{equation}
Applying these in turn,
\begin{equation}\label{eq:unit-push-compute}
\begin{aligned}
(\rho_n)_*(\rho_n^*f)&=(f\circ\rho_n)\circ\rho_n^{-1}=f,\\
(\iota_{n-1})_*f&=f\circ\iota_{n-1}^{-1}=f\circ\rho_{n-1}=\rho_{n-1}^*f.
\end{aligned}
\end{equation}
Hence \((d_i)_*(\rho_n^*f)=\rho_{n-1}^*f\).
The index \(i\) never enters the computation, so all \(n+1\) face pushforwards coincide with \(\id_{C(X,\Z)}\).
\end{proof}

\begin{lem}\label{lem:Moore}
One has \(H_0(\G;\Z)=C(X,\Z)\) and \(H_n(\G;\Z)=0\) for all \(n\ge 1\).
\end{lem}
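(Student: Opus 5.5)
Using \cref{lem:faces}, I will transport the whole Moore complex along the identifications \(\rho_n^*\colon C(X,\Z)\to C_n(\G;\Z)\) of \eqref{eq:rho-iso}. Then every face pushforward \((d_i)_*\) becomes \(\id_{C(X,\Z)}\), so the differential becomes
\begin{equation*}
\partial_n\;\longmapsto\;\Bigl(\sum_{i=0}^n(-1)^i\Bigr)\id_{C(X,\Z)}
=\begin{cases}0,& n\ \text{odd},\\ \id_{C(X,\Z)},& n\ \text{even},\ n\ge 2,\end{cases}
\end{equation*}
with \(\partial_0=0\) by definition. Hence the Moore complex is isomorphic, as a chain complex, to
\begin{equation*}
\cdots\xrightarrow{\ \id\ }C(X,\Z)\xrightarrow{\ 0\ }C(X,\Z)\xrightarrow{\ \id\ }C(X,\Z)\xrightarrow{\ 0\ }C(X,\Z)\xrightarrow{\ 0\ }0 .
\end{equation*}
Concretely, \(\rho_{n-1}^*\circ\partial_n^{\mathrm{transported}}=\partial_n\circ\rho_n^*\) follows from \((d_i)_*\rho_n^*f=\rho_{n-1}^*f\) summed with signs, and isomorphic chain complexes have isomorphic homology.

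Next I read off the homology degree by degree.

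\begin{itemize}
\item In degree \(0\), \(\ker\partial_0=C(X,\Z)\) and \(\partial_1=0\), since \(1-1=0\). So \(H_0(\G;\Z)\cong C(X,\Z)\), and it is literally equal under \(\rho_0=\id_X\).
\item For odd \(n\ge1\), \(\partial_n=0\), so the kernel is all of \(C(X,\Z)\), while \(\partial_{n+1}=\id\) (as \(n+1\ge2\) is even) is surjective. Thus \(H_n=0\).
\item For even \(n\ge2\), \(\partial_n=\id\) is injective, so the kernel is already \(0\) and \(H_n=0\).
\end{itemize}

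There is no real obstacle here; the only care needed is bookkeeping. The alternating sum of \(n+1\) ones is \(0\) for odd \(n\) and \(1\) for even \(n\). The edge case \(n=0\), where \(\partial_0\) is defined as \(0\) rather than by the formula, must be handled separately. I also need to state explicitly that conjugating by the isomorphisms \(\rho_n^*\) commutes with the differentials, so that kernels and images correspond.
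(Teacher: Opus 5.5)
Your proposal is correct and follows the paper's proof essentially line for line: use \cref{lem:faces} to identify each face pushforward with \(\id_{C(X,\Z)}\), compute \(\partial_n\) as the alternating sum of identities, and read off the homology degree by degree. Your explicit check that conjugation by the \(\rho_n^*\) commutes with the differentials makes precise a bookkeeping step that the paper leaves implicit.
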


\begin{proof}
By \cref{lem:faces}, for \(n\ge 1\) every \((d_i)_*\) is \(\id_{C(X,\Z)}\), so
\begin{equation}\label{eq:moore-differential}
\partial_n=\Bigl(\sum_{i=0}^n(-1)^i\Bigr)\id_{C(X,\Z)}
=
\begin{cases}
\id,& n\ \text{even},\\
0,& n\ \text{odd},
\end{cases}
\qquad\text{and}\quad\partial_0=0.
\end{equation}
The complex thus alternates between \(0\) and \(\id\),
\begin{equation}\label{eq:unit-complex}
\cdots\xrightarrow{\id}C(X,\Z)\xrightarrow{0}C(X,\Z)
\xrightarrow{\id}C(X,\Z)\xrightarrow{0}C(X,\Z)\to 0,
\end{equation}
and we read off homology degreewise:
\begin{itemize}
\item \step{Degree \(0\).} Here \(\partial_0=0\) and \(\partial_1=0\), so \(H_0=\ker\partial_0/\im\partial_1=C(X,\Z)/0=C(X,\Z)\).
\item \step{\(n\) even, \(n\ge 2\).} Then \(\partial_n=\id\) is injective, so \(\ker\partial_n=0\) and \(H_n=0\).
\item \step{\(n\) odd, \(n\ge 1\).} Then \(\partial_n=0\), so \(\ker\partial_n=C_n\).
Also \(\partial_{n+1}=\id\) (as \(n+1\) is even), so \(\im\partial_{n+1}=C_n\).
Hence \(H_n=C_n/C_n=0\).
\end{itemize}
Therefore \(H_0(\G;\Z)=C(X,\Z)\) and \(H_n(\G;\Z)=0\) for all \(n\ge 1\).
\end{proof}

\begin{rem}\label{rem:notnew}
\Cref{lem:Moore} is not new.
For a zero-dimensional space, regarded as an ample groupoid consisting of units, the computation appears as \cite[Proposition~4.4]{farsi2019ample}, and it also follows from the identification of the groupoid homology of a space with its compactly supported cohomology \cite[3.5(3)]{crainic2000homology}.
We keep the elementary proof so that both sides of the comparison rest on the same explicit model.
\end{rem}

\begin{rem}\label{rem:homotopy}
The vanishing in positive degrees is witnessed constructively by a contracting homotopy.
Define \(h_n\colon C_n(\G;\Z)\to C_{n+1}(\G;\Z)\) by
\begin{equation}\label{eq:contraction}
h_n\coloneqq
\begin{cases}
\id_{C(X,\Z)},& n\ \text{odd},\\
0,& n\ \text{even}.
\end{cases}
\end{equation}
A case check against \(\partial_n\in\{0,\id\}\) gives, for every \(n\ge 1\),
\begin{equation}\label{eq:contraction-identity}
\partial_{n+1}h_n+h_{n-1}\partial_n=\id_{C_n(\G;\Z)},
\end{equation}
so \(\id\simeq 0\) on the truncation in degrees \(\ge 1\): every positive-degree cycle is a boundary.
The relation fails in degree \(0\) (there \(h_0=0\) and \(h_{-1}=0\)), which is why \(H_0\) survives.
\end{rem}

\section{Classifying Space and Zeroth Singular Homology}\label{sec:classifying}
Throughout this section \(X\) is the Cantor set, \(\G=(X\rightrightarrows X)\) is its unit groupoid, and \(\iota_n,\rho_n\) are the mutually inverse homeomorphisms of \cref{sec:moore}.
The computation that identified the face maps there identifies every structure map of the nerve: for monotone \(\theta\colon[m]\to[n]\), each entry of \(\theta^*(x,\dots,x)\) is a composite of units at \(x\) (\cref{def:nerve}), hence equals \(x\), so
\begin{equation}\label{eq:unit-structure}
\theta^*=\iota_m\circ\rho_n.
\end{equation}
In particular every degeneracy map is \(s_i=\iota_{n+1}\circ\rho_n\), and \(\rho_m\circ\theta^*\circ\iota_n=\id_X\) for every monotone \(\theta\).

\begin{defi}\label{def:const}
For a topological space \(Y\) the \term{constant simplicial space} \(cY\) is the simplicial space with \((cY)_n\coloneqq Y\) for all \(n\ge 0\) and \(\theta^*\coloneqq\id_Y\) for every monotone \(\theta\).
\end{defi}

\begin{lem}\label{lem:iso}
Let \(X\) be the Cantor set, let \(\G=(X\rightrightarrows X)\) be its unit groupoid, and let \(\rho_n\colon\G_n\to X\) be the homeomorphisms of \cref{sec:moore}.
Then \(\rho_\bullet\coloneqq(\rho_n)_{n\ge 0}\colon\G_\bullet\to cX\) is an isomorphism of simplicial spaces.
\end{lem}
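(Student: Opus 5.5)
By \cref{def:simplicial}, the claim has two parts. First, \(\rho_\bullet\) is a morphism of simplicial spaces \(\G_\bullet\to cX\). Second, each component \(\rho_n\) is a homeomorphism. The second part is already recorded in \cref{sec:moore}. There \(\rho_n\) and \(\iota_n\) are mutually inverse continuous maps for \(n\ge1\), and \(\rho_0=\iota_0=\id_X\). In addition \(\G_0=X=(cX)_0\), and \(\G_n\cong X=(cX)_n\) via \eqref{eq:unit-nerve}. Nothing beyond citing \eqref{eq:iota-rho} is needed for this part.

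For the morphism property, fix a monotone \(\theta\colon[m]\to[n]\). We must show \(\rho_m\circ\theta^*=\theta^*_{cX}\circ\rho_n\). Since \(\theta^*_{cX}=\id_X\) by \cref{def:const}, this reads \(\rho_m\circ\theta^*=\rho_n\) as maps \(\G_n\to X\). I would invoke \eqref{eq:unit-structure}, \(\theta^*=\iota_m\circ\rho_n\). Then \(\rho_m\circ\theta^*=(\rho_m\circ\iota_m)\circ\rho_n=\rho_n\) by \eqref{eq:iota-rho}, with the convention \(\rho_0\circ\iota_0=\id_X\) covering \(m=0\).

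The only step needing care is the justification of \eqref{eq:unit-structure} itself, including the boundary cases \(m=0\) and \(n=0\). For \(m=0\), \cref{def:nerve} gives \(\theta^*(g)=x_{\theta(0)}(g)\). For \(g=(x,\dots,x)\) every unit \(x_k(g)\) equals \(x\), since \(r=s=\id_X\). Hence \(\theta^*(g)=x=\iota_0\rho_n(g)\).

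For \(m\ge1\), each entry \(\gamma_{\theta(j-1),\theta(j)}\) falls into one of two cases. It is either a product of copies of \(x\), which equals \(x\) because \(xx=x\) in the unit groupoid. Or it is the unit \(x_{\theta(j)}(g)=x\) when \(\theta(j-1)=\theta(j)\). So \(\theta^*(g)=(x,\dots,x)=\iota_m(x)\).

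For \(n=0\) and \(g=x\in\G_0\), the partial products are all \(x_0(g)=x\), and the same conclusion holds. I expect no real obstacle. The main point is simply to make sure the degenerate cases, where \(\theta\) is non-injective or \(m\) or \(n\) is zero, are all handled by this case split.

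Finally, as noted after \cref{def:simplicial}, the inverse family \((\iota_n)_{n\ge0}\) is automatically a morphism \(cX\to\G_\bullet\). This completes the proof that \(\rho_\bullet\) is an isomorphism of simplicial spaces.
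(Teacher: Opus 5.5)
Your proposal is correct and matches the paper's proof: the paper also uses \eqref{eq:unit-structure} and \eqref{eq:iota-rho} to get \(\rho_m\circ\theta^*=\rho_m\circ\iota_m\circ\rho_n=\rho_n=\id_X\circ\rho_n\), and concludes because every \(\rho_n\) is a homeomorphism. Your case check of \eqref{eq:unit-structure} only spells out the one-line justification the paper gives just before the lemma, and your remark that \((\iota_n)_{n\ge0}\) is a morphism is harmless but not required by \cref{def:simplicial}.
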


\begin{proof}
Each \(\rho_n\) is a homeomorphism.
For monotone \(\theta\colon[m]\to[n]\) the compatibility square
\begin{center}
\begin{tikzcd}[column sep=large,row sep=large]
\G_n\arrow[r,"\theta^*"]\arrow[d,"\rho_n"']&\G_m\arrow[d,"\rho_m"]\\
X\arrow[r,"\id_X"']&X
\end{tikzcd}
\end{center}
commutes:
\begin{equation}\label{eq:iso-square}
\begin{aligned}
\rho_m\circ\theta^*
&=\rho_m\circ\iota_m\circ\rho_n&&\quad\text{by \eqref{eq:unit-structure}}\\
&=\rho_n&&\quad\text{by \eqref{eq:iota-rho}}\\
&=\id_X\circ\rho_n.
\end{aligned}
\end{equation}
Hence \(\rho_\bullet\) is a morphism of simplicial spaces, and since every \(\rho_n\) is a homeomorphism it is an isomorphism.
\end{proof}

\subhead{Geometric Realization}
For \(n\ge 0\) the \term{standard \(n\)-simplex} is
\begin{equation}\label{eq:simplex}
\Delta^n\coloneqq\Bigl\{t=(t_0,\dots,t_n)\in[0,1]^{n+1}\Bigm|\sum_{i=0}^n t_i=1\Bigr\},
\end{equation}
with the Euclidean topology.
For a monotone \(\theta\colon[m]\to[n]\) let \(\theta_*\colon\Delta^m\to\Delta^n\) be the affine map
\begin{equation}\label{eq:affine}
(\theta_*(t))_j\coloneqq\sum_{i\in\theta^{-1}(j)}t_i,
\end{equation}
an empty sum being \(0\).
It does land in \(\Delta^n\), since its entries are nonnegative and the fibers of \(\theta\) partition \([m]\), so \(\sum_{j=0}^n(\theta_*(t))_j=\sum_{i=0}^m t_i=1\).
Being linear in \(t\), it is continuous.
The assignment is functorial: \((\id_{[n]})_*=\id_{\Delta^n}\), and for monotone \(\theta'\colon[l]\to[m]\) and \(\theta\colon[m]\to[n]\), \(t\in\Delta^l\) and \(j\in[n]\),
\begin{equation}\label{eq:affine-functorial}
\begin{aligned}
((\theta\circ\theta')_*(t))_j
&=\sum_{i\in(\theta\theta')^{-1}(j)}t_i&&\quad\text{by \eqref{eq:affine}}\\
&=\sum_{k\in\theta^{-1}(j)}\ \sum_{i\in\theta'^{-1}(k)}t_i&&\quad\text{partition of \((\theta\theta')^{-1}(j)\)}\\
&=(\theta_*(\theta'_*(t)))_j.&&\quad\text{by \eqref{eq:affine}}
\end{aligned}
\end{equation}

Let \(L_\bullet\) be a simplicial space.
The \term{geometric realization} is
\begin{equation}\label{eq:realization}
\real{L_\bullet}\coloneqq\Bigl(\coprod_{n\ge 0}\Delta^n\times L_n\Bigr)\Big/\sim,
\qquad
(\theta_*(t),y)\sim(t,\theta^*(y)),
\end{equation}
where \(\sim\) denotes the equivalence relation generated by these identifications, with \(\theta\) ranging over all monotone maps \([m]\to[n]\) and \((t,y)\) over \(\Delta^m\times L_n\).
Write \(Q\) for the quotient map, \([t,y]\coloneqq Q(t,y)\), and \(\ast_0\) for the unique point of \(\Delta^0\).
The \term{classifying space} of a topological groupoid \(\G\) is \(B\G\coloneqq\real{\G_\bullet}\).

\begin{lem}\label{lem:realfunct}
An isomorphism of simplicial spaces \(f_\bullet\colon L_\bullet\to L'_\bullet\) induces a homeomorphism \(\real{f_\bullet}\colon\real{L_\bullet}\to\real{L'_\bullet}\) with \(\real{f_\bullet}([t,y])=[t,f_n(y)]\) for all \((t,y)\in\Delta^n\times L_n\).
\end{lem}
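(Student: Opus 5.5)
We define \(\real{f_\bullet}\) on the disjoint union first and then pass to the quotient. Set \(F\colon\coprod_n\Delta^n\times L_n\to\coprod_n\Delta^n\times L'_n\) by \(F(t,y)\coloneqq(t,f_n(y))\) on the \(n\)-th summand. Each summand map \(\id_{\Delta^n}\times f_n\) is continuous, so \(F\) is continuous by the universal property of the coproduct topology. Write \(Q'\) for the quotient map of \(\real{L'_\bullet}\).

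Next we show that \(F\) respects the generating identifications of \eqref{eq:realization}. For monotone \(\theta\colon[m]\to[n]\) and \((t,y)\in\Delta^m\times L_n\) we have
\[
F(\theta_*(t),y)=(\theta_*(t),f_n(y))\sim(t,\theta^*(f_n(y)))=(t,f_m(\theta^*(y)))=F(t,\theta^*(y)),
\]
where the middle equality is the morphism condition \(f_m\circ\theta^*=\theta^*\circ f_n\) of \cref{def:simplicial}. The relation \(\sim\) is the equivalence relation generated by these pairs. Consider the relation \(\{(a,b)\mid Q'F(a)=Q'F(b)\}\): it is an equivalence relation, and it contains every generating pair. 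It therefore contains all of \(\sim\). Hence \(Q'\circ F\) is constant on equivalence classes and factors uniquely through \(Q\) as a map \(\real{f_\bullet}\). This map is continuous by the universal property of the quotient topology, and it satisfies the displayed formula \(\real{f_\bullet}([t,y])=[t,f_n(y)]\) by construction.

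For the inverse, the remark after \cref{def:simplicial} says that \(f^{-1}_\bullet\coloneqq(f_n^{-1})_{n\ge0}\) is again a morphism of simplicial spaces. The same construction applied to it gives a continuous map \(\real{f^{-1}_\bullet}\colon\real{L'_\bullet}\to\real{L_\bullet}\) with \([t,y']\mapsto[t,f_n^{-1}(y')]\). On representatives, the two composites send \([t,y]\mapsto[t,y]\) and \([t,y']\mapsto[t,y']\). Every point of a realization is of the form \([t,y]\) because \(Q\) is surjective, so both composites are identities. Thus \(\real{f_\bullet}\) is a continuous bijection with continuous inverse, that is, a homeomorphism.

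The only step needing care is the passage from the generating identifications to the generated equivalence relation. It is handled by the observation that the kernel of \(Q'\circ F\) is already an equivalence relation, so no explicit description of chains of identifications is needed. No compactly generated or other topological subtleties arise, since we use only the universal properties of coproducts and quotients, and never products of quotient maps.
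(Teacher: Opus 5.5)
Your proof is correct and follows the paper's argument. You push $\coprod_n(\id_{\Delta^n}\times f_n)$ through the quotient via the morphism condition, then obtain the inverse from $(f_n^{-1})_{n\ge0}$ and check both composites on classes $[t,y]$; your remark that the kernel of $Q'\circ F$ is an equivalence relation containing the generating pairs also makes explicit the one step the paper leaves implicit.
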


\begin{proof}
The continuous map \(\coprod_{n\ge 0}(\id_{\Delta^n}\times f_n)\) sends each generating pair to a generating pair,
\begin{equation}\label{eq:realfunct-pairs}
\begin{aligned}
(\theta_*(t),f_n(y))
&\sim(t,\theta^*(f_n(y)))&&\quad\text{by \eqref{eq:realization}}\\
&=(t,f_m(\theta^*(y))),&&\quad\text{by \cref{def:simplicial}}
\end{aligned}
\end{equation}
so by the universal property of the quotient topology it descends to a continuous map \(\real{f_\bullet}\) with the stated formula.
The same construction applied to the inverse morphism \((f_n^{-1})_{n\ge 0}\) gives a continuous map in the other direction, and the two composites fix every class \([t,y]\), so \(\real{f_\bullet}\) is a homeomorphism.
\end{proof}

\begin{lem}\label{lem:const}
For every topological space \(Y\) the maps
\begin{equation}\label{eq:const-maps}
P\colon\real{cY}\to Y,\quad P([t,y])=y,
\qquad
\jmath\colon Y\to\real{cY},\quad\jmath(y)=[\ast_0,y],
\end{equation}
are mutually inverse homeomorphisms.
In particular \(\real{cY}\cong Y\).
\end{lem}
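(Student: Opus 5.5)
The plan is to verify three things: that \(P\) is well defined and continuous, that \(\jmath\) is continuous, and that \(P\) and \(\jmath\) are inverse to each other. Throughout, the structure maps of \(cY\) are identities.

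\textbf{Well-definedness and continuity of \(P\).} Define \(\tilde P\colon\coprod_{n\ge 0}\Delta^n\times Y\to Y\) by \(\tilde P(t,y)\coloneqq y\). On each summand it is the projection \(\Delta^n\times Y\to Y\), so \(\tilde P\) is continuous. A generating pair in \eqref{eq:realization} has the form \((\theta_*(t),y)\sim(t,\theta^*(y))=(t,y)\), because \(\theta^*=\id_Y\) in \(cY\). Hence \(\tilde P\) takes the same value \(y\) on both members of each generating pair. The fibres of \(\tilde P\) are therefore unions of equivalence classes of the generated relation, since the set of pairs with equal \(\tilde P\)-value is itself an equivalence relation containing the generators. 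By the universal property of the quotient topology, \(\tilde P\) descends to a continuous map \(P\) with \(P\circ Q=\tilde P\).

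\textbf{Continuity of \(\jmath\) and \(P\circ\jmath=\id_Y\).} The map \(\jmath\) is the composite \(Y\to\Delta^0\times Y\hookrightarrow\coprod_{n}\Delta^n\times Y\xrightarrow{Q}\real{cY}\). Each of these maps is continuous, so \(\jmath\) is continuous. Moreover \(P(\jmath(y))=P([\ast_0,y])=y\) directly from the definitions.

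\textbf{The main step: \(\jmath\circ P=\id\).} I must show that \([t,y]=[\ast_0,y]\) for every \((t,y)\in\Delta^n\times Y\). Let \(\theta\colon[n]\to[0]\) be the unique monotone map. By \eqref{eq:affine}, \(\theta_*(t)=\sum_i t_i=1\), so \(\theta_*(t)=\ast_0\). The generating identification in \eqref{eq:realization} gives \((\theta_*(t),y)\sim(t,\theta^*(y))\), and since \(\theta^*=\id_Y\) this reads \((\ast_0,y)\sim(t,y)\). Hence \(\jmath(P([t,y]))=[\ast_0,y]=[t,y]\).

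The only care needed is with the direction of the generating relation in \eqref{eq:realization}. There \(\theta\colon[m]\to[n]\) and the pair lies in \(\Delta^m\times L_n\). Here this means taking \(m=n\) and target \([0]\), with the point \(y\) regarded as an element of \(L_0=Y\). That is permitted because every \(L_k\) equals \(Y\). With both composites equal to the identity, \(P\) and \(\jmath\) are mutually inverse continuous maps, hence homeomorphisms, and \(\real{cY}\cong Y\).
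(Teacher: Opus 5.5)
Your proof is correct and follows essentially the same route as the paper. You descend the projection through the quotient to get \(P\), write \(\jmath\) as inclusion of the degree-\(0\) summand followed by \(Q\), and use the unique monotone map \([n]\to[0]\) to identify \([t,y]=[\ast_0,y]\). Your explicit check that the kernel relation of \(\tilde P\) contains the generated equivalence relation, and your remark on the direction of the generating relation, spell out two points the paper leaves implicit.
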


\begin{proof}\keepwithnext
\begin{itemize}
\item \step{\(P\) is well defined and continuous.}
The projections \(\pr_Y\colon\Delta^n\times Y\to Y\) assemble to a continuous map on \(\coprod_n\Delta^n\times Y\) that agrees on each generating pair, since both sides carry the same second coordinate: \(\pr_Y(\theta_*(t),y)=y=\pr_Y(t,y)\).
By the universal property of the quotient it descends to a continuous \(P\colon\real{cY}\to Y\) with \(P([t,y])=y\).
\item \step{\(\jmath\) is continuous.}
It is the composite
\begin{equation}\label{eq:jmath}
Y=\Delta^0\times Y\hookrightarrow\coprod_{n\ge0}\Delta^n\times Y\xrightarrow{Q}\real{cY}
\end{equation}
of the inclusion of the degree-\(0\) summand with the quotient map, hence continuous.
No hypothesis on \(Y\) is used.
In particular, no product is taken with a quotient map.
\item \step{\(P\jmath=\id_Y\).}
For \(y\in Y\) one has \(P(\jmath(y))=P([\ast_0,y])=y\).
\item \step{\(\jmath P=\id_{\real{cY}}\).}
Fix \((t,y)\in\Delta^n\times Y\) and let \(\theta\colon[n]\to[0]\) be the unique monotone map.
Its affine map \(\theta_*\colon\Delta^n\to\Delta^0\) is constant with value \(\ast_0\), and \(\theta^*(y)=y\) in \(cY\), so the generating relation gives
\begin{equation}\label{eq:const-relation}
(\ast_0,y)=(\theta_*(t),y)\sim(t,y),
\qquad\text{that is,}\quad[\ast_0,y]=[t,y].
\end{equation}
Hence \(\jmath(P([t,y]))=\jmath(y)=[\ast_0,y]=[t,y]\), and as the classes \([t,y]\) exhaust \(\real{cY}\) we get \(\jmath P=\id\).
\end{itemize}
Thus \(P\) and \(\jmath\) are mutually inverse continuous bijections, hence homeomorphisms.
\end{proof}

Applying \cref{lem:realfunct} to the isomorphism \(\rho_\bullet\colon\G_\bullet\to cX\) of \cref{lem:iso}, and then \cref{lem:const}, yields homeomorphisms
\begin{equation}\label{eq:BG}
B\G=\real{\G_\bullet}\cong\real{cX}\cong X.
\end{equation}

\subhead{Singular Homology in Degree Zero}

\begin{defi}\label{def:dirsum}
For a set \(I\) the \term{direct sum} is
\begin{equation}\label{eq:dirsum}
\bigoplus_{i\in I}\Z\coloneqq\{a\colon I\to\Z\mid a(i)\neq 0\ \text{for only finitely many }i\in I\},
\end{equation}
an abelian group under pointwise addition.
For \(i\in I\) the \term{standard generator} \(e_i\in\bigoplus_{j\in I}\Z\) is given by \(e_i(i)=1\) and \(e_i(j)=0\) for \(j\neq i\).
Every element \(a\) equals the finite sum \(\sum_{i\in I,\ a(i)\neq 0}a(i)\,e_i\).
\end{defi}

The direct sum is the free abelian group on its standard generators: for every abelian group \(A\) and every map \(\varphi\colon I\to A\) there is exactly one homomorphism \(\bigoplus_{i\in I}\Z\to A\) with \(e_i\mapsto\varphi(i)\), namely \(a\mapsto\sum_{a(i)\neq 0}a(i)\,\varphi(i)\).
It is additive because each \(a\mapsto a(i)\) is, and it is unique because the \(e_i\) generate.

\begin{defi}\label{def:sing}
Let \(Y\) be a topological space.
A \term{singular \(n\)-simplex} in \(Y\) is a continuous map \(\sigma\colon\Delta^n\to Y\).
Write \(S_n(Y)\) for the set of singular \(n\)-simplices.
The group of \term{singular \(n\)-chains} is the free abelian group
\begin{equation}\label{eq:sing-chains}
\Csing_n(Y;\Z)\coloneqq\bigoplus_{\sigma\in S_n(Y)}\Z
\end{equation}
of \cref{def:dirsum}.
We write \(\sigma\) also for the standard generator \(e_\sigma\).
Since \(\Delta^0\) is a point, the singular \(0\)-simplices correspond to the points of \(Y\).
For \(y\in Y\) we write \(\gen{y}\in\Csing_0(Y;\Z)\) for the generator at \(y\).
The \term{singular boundary} \(\dsing_n\colon\Csing_n(Y;\Z)\to\Csing_{n-1}(Y;\Z)\) is the homomorphism determined by
\begin{equation}\label{eq:sing-boundary}
\dsing_n\sigma\coloneqq\sum_{i=0}^n(-1)^i\,\sigma\circ(\delta^n_i)_*
\quad(n\ge 1),
\qquad
\dsing_0\coloneqq 0,
\end{equation}
with \(\Csing_{-1}(Y;\Z)\coloneqq 0\) and \((\delta^n_i)_*\colon\Delta^{n-1}\to\Delta^n\) the affine map of the coface \(\delta^n_i\), and the \term{singular homology} of \(Y\) is
\begin{equation}\label{eq:sing-homology}
\Hsing_n(Y;\Z)\coloneqq\frac{\ker(\dsing_n)}{\im(\dsing_{n+1})},
\qquad n\ge 0.
\end{equation}
\end{defi}

This is the standard singular chain complex \cite[Section~2.1]{hatcher2002algebraic}.
The identity \(\dsing_{n-1}\circ\dsing_n=0\) follows exactly as in \cref{lem:boundary}, with functoriality of \(\theta\mapsto\theta_*\) and \cref{lem:coface} in place of \cref{lem:pushforward}\labelcref{it:push-funct} and \cref{lem:nerve}.
Write \(v_0\coloneqq(1,0)\) and \(v_1\coloneqq(0,1)\) for the two vertices of \(\Delta^1\).
Then \((\delta^1_0)_*(\ast_0)=v_1\) and \((\delta^1_1)_*(\ast_0)=v_0\), so the boundary of a singular \(1\)-simplex \(\sigma\) is
\begin{equation}\label{eq:boundary-one}
\dsing_1\sigma=\gen{\sigma(v_1)}-\gen{\sigma(v_0)}.
\end{equation}
A continuous map \(u\colon Y\to Y'\) induces homomorphisms \(\Csing_n(Y;\Z)\to\Csing_n(Y';\Z)\) given by \(\sigma\mapsto u\circ\sigma\), which commute with the boundaries, because \((u\circ\sigma)\circ(\delta^n_i)_*=u\circ\bigl(\sigma\circ(\delta^n_i)_*\bigr)\).
The assignment is functorial in \(u\) and is the identity for \(u=\id_Y\), so a homeomorphism induces isomorphisms \(\Hsing_n(Y;\Z)\cong\Hsing_n(Y';\Z)\) in every degree.
Combined with \eqref{eq:BG} this gives \(\Hsing_\bullet(B\G;\Z)\cong\Hsing_\bullet(X;\Z)\) for the Cantor unit groupoid.

\begin{lem}\label{lem:H0}
For every topological space \(Y\) the group \(\Hsing_0(Y;\Z)\) is isomorphic to \(\bigoplus_{C\in\pi_0(Y)}\Z\), where \(\pi_0(Y)\) denotes the set of path components of \(Y\).
\end{lem}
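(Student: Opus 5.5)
The plan is the standard argument: build a homomorphism out of \(\Csing_0(Y;\Z)\) that records path components, show it vanishes exactly on boundaries, and show it is onto. Since \(\dsing_0=0\), we have \(\ker\dsing_0=\Csing_0(Y;\Z)\), so \(\Hsing_0(Y;\Z)=\Csing_0(Y;\Z)/\im\dsing_1\).

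Write \([y]\in\pi_0(Y)\) for the path component of \(y\). By the universal property of the direct sum recorded after \cref{def:dirsum}, there is a unique homomorphism
\[
\varepsilon\colon\Csing_0(Y;\Z)\to\bigoplus_{C\in\pi_0(Y)}\Z,\qquad \gen{y}\mapsto e_{[y]}.
\]
It is surjective, because every generator \(e_C\) is hit by \(\gen{y}\) for any \(y\in C\), and the \(e_C\) generate the target.

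\textbf{\(\varepsilon\) kills boundaries.} Let \(\sigma\) be a singular \(1\)-simplex. Then \(\sigma\) is a path from \(\sigma(v_0)\) to \(\sigma(v_1)\); precomposing with \(s\mapsto(1-s,s)\) makes it a path on \([0,1]\). So both endpoints lie in the same path component. By \eqref{eq:boundary-one},
\[
\varepsilon(\dsing_1\sigma)=e_{[\sigma(v_1)]}-e_{[\sigma(v_0)]}=0.
\]
Since \(\varepsilon\) is additive, \(\im\dsing_1\subseteq\ker\varepsilon\).

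\textbf{\(\ker\varepsilon\subseteq\im\dsing_1\).} This is the step needing some care. Fix a base point \(y_C\in C\) for each path component \(C\), using choice. For every \(y\in Y\), pick a path from \(y_{[y]}\) to \(y\) and reparametrize it into a singular \(1\)-simplex \(\tau_y\) with \(\tau_y(v_0)=y_{[y]}\) and \(\tau_y(v_1)=y\). Then
\[
\gen{y}-\gen{y_{[y]}}=\dsing_1\tau_y\in\im\dsing_1.
\]
Now take \(c=\sum_k n_k\gen{y_k}\in\ker\varepsilon\). Replacing each \(\gen{y_k}\) by \(\gen{y_{[y_k]}}\) changes \(c\) by an element of \(\im\dsing_1\). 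The result is \(\sum_C\bigl(\sum_{[y_k]=C}n_k\bigr)\gen{y_C}\). Its image under \(\varepsilon\) is \(\sum_C\bigl(\sum_{[y_k]=C}n_k\bigr)e_C=\varepsilon(c)=0\). Because the \(e_C\) are independent, every coefficient \(\sum_{[y_k]=C}n_k\) vanishes, so this element is \(0\). Hence \(c\in\im\dsing_1\).

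\textbf{Conclusion.} We have \(\ker\varepsilon=\im\dsing_1\), so \(\varepsilon\) induces an isomorphism
\[
\Hsing_0(Y;\Z)\cong\bigoplus_{C\in\pi_0(Y)}\Z.
\]
The only points needing care are checking that path-connectedness in the usual sense matches joinability by a singular \(1\)-simplex, which is the affine reparametrization \([0,1]\cong\Delta^1\), and the choice of base points.
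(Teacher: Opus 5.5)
Your proof is correct and follows essentially the same route as the paper. Your $\varepsilon$ is the paper's $\beta$, and your reduction of a kernel element to the chosen base points $y_C$ is the paper's check that $\alpha\circ\bar\beta=\id$; the only difference is that you conclude with surjectivity and the first isomorphism theorem rather than writing down the inverse $\alpha$ explicitly.
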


\begin{proof}
Two points of \(Y\) lie in the same \term{path component} if some continuous map \([0,1]\to Y\) sends \(0\) to the one and \(1\) to the other.
This is an equivalence relation: constant paths give reflexivity, precomposition with \(t\mapsto 1-t\) gives symmetry, and concatenation of paths, continuous because \([0,1/2]\) and \([1/2,1]\) form a finite closed cover of \([0,1]\), gives transitivity.
The set of classes is \(\pi_0(Y)\), and \(\bar y\in\pi_0(Y)\) denotes the class of \(y\in Y\).
Since \(\dsing_0=0\), the group \(\Hsing_0(Y;\Z)\) is \(\Csing_0(Y;\Z)/\im\dsing_1\).
Write \(\equiv\) for congruence modulo \(\im\dsing_1\).
\begin{itemize}
\item \step{Endpoints of \(1\)-simplices.}
If \(y,y'\in Y\) lie in the same path component, there is a continuous \(\sigma\colon\Delta^1\to Y\) with \(\sigma(v_0)=y\) and \(\sigma(v_1)=y'\): precompose a path with the homeomorphism \(\Delta^1\to[0,1]\) given by \((t_0,t_1)\mapsto t_1\), which sends \(v_0\) to \(0\) and \(v_1\) to \(1\).
Then \(\dsing_1\sigma=\gen{y'}-\gen{y}\) by \eqref{eq:boundary-one}, so \(\gen{y}\equiv\gen{y'}\).
Conversely, for every singular \(1\)-simplex \(\sigma\) the points \(\sigma(v_0)\) and \(\sigma(v_1)\) lie in the same path component, since \(t\mapsto\sigma(1-t,t)\) is a path between them.
\item \step{The map \(\bar\beta\).}
Let \(\beta\colon\Csing_0(Y;\Z)\to\bigoplus_{C\in\pi_0(Y)}\Z\) be the unique homomorphism with \(\beta(\gen{y})=e_{\bar y}\), which exists by the universal property stated after \cref{def:dirsum}.
For every singular \(1\)-simplex \(\sigma\),
\begin{equation}\label{eq:beta-boundary}
\begin{aligned}
\beta(\dsing_1\sigma)
&=\beta(\gen{\sigma(v_1)})-\beta(\gen{\sigma(v_0)})&&\quad\text{by \eqref{eq:boundary-one}}\\
&=e_{\overline{\sigma(v_1)}}-e_{\overline{\sigma(v_0)}}&&\quad\text{definition of \(\beta\)}\\
&=0,
\end{aligned}
\end{equation}
because the two endpoints lie in the same path component.
Hence \(\beta\) vanishes on \(\im\dsing_1\) and induces a homomorphism \(\bar\beta\colon\Hsing_0(Y;\Z)\to\bigoplus_{C\in\pi_0(Y)}\Z\).
\item \step{The map \(\alpha\).}
Choose a point \(y_C\in C\) for every \(C\in\pi_0(Y)\), and let \(\alpha\colon\bigoplus_{C\in\pi_0(Y)}\Z\to\Hsing_0(Y;\Z)\) be the unique homomorphism with \(\alpha(e_C)=\gen{y_C}+\im\dsing_1\), again by the universal property.
\item \step{Mutually inverse.}
On generators, \(\bar\beta(\alpha(e_C))=\beta(\gen{y_C})=e_{\overline{y_C}}=e_C\), so \(\bar\beta\circ\alpha=\id\).
For \(y\in Y\) the points \(y\) and \(y_{\bar y}\) lie in the same path component, so \(\gen{y}\equiv\gen{y_{\bar y}}\) and
\begin{equation}\label{eq:alpha-beta}
\begin{aligned}
\alpha\bigl(\bar\beta(\gen{y}+\im\dsing_1)\bigr)
&=\alpha(e_{\bar y})&&\quad\text{definition of \(\bar\beta\)}\\
&=\gen{y_{\bar y}}+\im\dsing_1&&\quad\text{definition of \(\alpha\)}\\
&=\gen{y}+\im\dsing_1.&&\quad\text{\(\gen{y}\equiv\gen{y_{\bar y}}\)}
\end{aligned}
\end{equation}
The classes of the generators \(\gen{y}\) generate \(\Hsing_0(Y;\Z)\), so \(\alpha\circ\bar\beta=\id\).
Hence \(\alpha\) and \(\bar\beta\) are mutually inverse isomorphisms.\qedhere
\end{itemize}
\end{proof}

Since \(X\) is totally disconnected, the image of every path \([0,1]\to X\) is connected, hence a singleton.
Therefore every path component of \(X\) is a singleton.
We identify \(\pi_0(X)=X\), and \cref{lem:H0} together with \eqref{eq:BG} yields
\begin{equation}\label{eq:H0-cantor}
\Hsing_0(B\G;\Z)\cong\Hsing_0(X;\Z)\cong\bigoplus_{x\in X}\Z.
\end{equation}

\begin{rem}\label{rem:onlydeg0}
Total disconnectedness forces every singular simplex \(\Delta^n\to X\) to be constant, since \(\Delta^n\) is connected, so \(\Csing_n(X;\Z)\) is free on the points of \(X\) and each face sends the constant \(n\)-simplex at \(x\) to the constant \((n-1)\)-simplex at \(x\).
The singular boundary is thus \(\dsing_n=\bigl(\sum_{i=0}^n(-1)^i\bigr)\id\) for \(n\ge 1\), together with \(\dsing_0=0\): exactly the alternating pattern of the Moore complex in \cref{lem:Moore}.
Consequently
\begin{equation}\label{eq:sing-vanish}
\Hsing_n(B\G;\Z)\cong\Hsing_n(X;\Z)=0=H_n(\G;\Z)\qquad\text{for } n\ge 1,
\end{equation}
so the two theories agree in every positive degree.
They part company only in degree \(0\), where the degree-\(0\) groups are \(C(X,\Z)\) for the Moore complex and \(\bigoplus_{x\in X}\Z\) for the singular complex: the same complex shape, but a countable group against one of cardinality \(2^{\aleph_0}\).
\end{rem}

\section{Cardinality Separation}\label{sec:cardinality}
Recall from \cref{def:dirsum} the direct sum \(\bigoplus_{x\in X}\Z\) and its standard generators \(e_x\).
The assignment \(x\mapsto e_x\) is injective, and \(\abs{X}=\abs{\{0,1\}^{\N}}=2^{\aleph_0}\), so
\begin{equation}\label{eq:card-lower}
\abs[\Big]{\bigoplus_{x\in X}\Z}\ge\abs{X}=2^{\aleph_0}.
\end{equation}

We bound \(C(X,\Z)\) from the other side.
The argument rests on the elementary combinatorics of cylinders: for \(\varepsilon\in\{0,1\}^m\) and \(\varepsilon'\in\{0,1\}^n\) with \(m\le n\),
\begin{equation}\label{eq:cylinder-dichotomy}
[\varepsilon]\cap[\varepsilon']\neq\varnothing
\iff
\varepsilon\ \text{is a prefix of}\ \varepsilon',
\quad\text{in which case}\quad
[\varepsilon']\subseteq[\varepsilon].
\end{equation}
Otherwise \([\varepsilon]\) and \([\varepsilon']\) are disjoint.
In particular the \(2^N\) \term{level-\(N\) cylinders} \(\{[\varepsilon']\mid\varepsilon'\in\{0,1\}^N\}\) form a clopen partition of \(X\), and each refines the partition at every lower level.

\begin{lem}\label{lem:resolution}
For every \(f\in C(X,\Z)\) there exists \(N\in\N\) such that \(f\) is constant on each level-\(N\) cylinder.
\end{lem}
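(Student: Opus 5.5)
The plan is a compactness argument. Fix \(f\in C(X,\Z)\). Since \(\Z\) is discrete and \(f\) is continuous, for each \(x\in X\) the fiber \(f^{-1}(f(x))\) is an open neighborhood of \(x\). The cylinders form a basis, so we choose \(n_x\in\N\) and the prefix \(\varepsilon^x\in\{0,1\}^{n_x}\) of \(x\) with \(x\in[\varepsilon^x]\subseteq f^{-1}(f(x))\). Then \(f\) is constant on \([\varepsilon^x]\).

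The cylinders \([\varepsilon^x]\), \(x\in X\), form an open cover of \(X\). The Cantor set is compact (Tychonoff, or as already used in \cref{sec:moore}), so finitely many of them cover \(X\); call them \([\varepsilon^{x_1}],\dots,[\varepsilon^{x_k}]\). Set \(N\coloneqq\max_j n_{x_j}\). If \(X\) were empty there would be nothing to prove, and in any case \(N\ge 1\) can be arranged by enlarging \(N\).

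Now take any level-\(N\) cylinder \([\varepsilon']\) with \(\varepsilon'\in\{0,1\}^N\). It is nonempty, so it meets some \([\varepsilon^{x_j}]\) from the finite cover. Since \(n_{x_j}\le N\), the dichotomy \eqref{eq:cylinder-dichotomy} shows that \(\varepsilon^{x_j}\) is a prefix of \(\varepsilon'\) and that \([\varepsilon']\subseteq[\varepsilon^{x_j}]\). As \(f\) is constant on the larger set, it is constant on \([\varepsilon']\).

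No step is a real obstacle. The only points needing care are compactness of \(X\), which is standard, and the use of \eqref{eq:cylinder-dichotomy} to pass from an arbitrary finite cover by cylinders of varying lengths to the uniform level-\(N\) partition.
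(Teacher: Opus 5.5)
Your proof is correct and follows the paper's argument step for step: you take a cylinder around each point on which \(f\) is constant, extract a finite subcover by compactness, set \(N\) to the maximal cylinder length, and use the dichotomy \eqref{eq:cylinder-dichotomy} to place each level-\(N\) cylinder inside a cylinder of the cover. Your aside about enlarging \(N\) is harmless, since the last step only needs \(N\ge\max_j n_{x_j}\).
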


\begin{proof}\keepwithnext
\begin{itemize}
\item \step{Local trivialization.}
Fix \(f\in C(X,\Z)\).
Since \(f\) is locally constant and the cylinders form a basis, each \(x\in X\) admits a cylinder \([\varepsilon_x]\ni x\) on which \(f\equiv f(x)\).
\item \step{Compactness.}
The family \(\{[\varepsilon_x]\}_{x\in X}\) is an open cover of the compact space \(X\), so it has a finite subcover \([\varepsilon_{x_1}],\dots,[\varepsilon_{x_r}]\).
Put \(N\coloneqq\max_{1\le j\le r}\abs{\varepsilon_{x_j}}\).
\item \step{Constancy at level \(N\).}
Fix \(\varepsilon'\in\{0,1\}^N\).
The cylinder \([\varepsilon']\) is nonempty, and since the \([\varepsilon_{x_j}]\) cover \(X\) it meets some \([\varepsilon_{x_j}]\).
As \(\abs{\varepsilon_{x_j}}\le N\), the dichotomy \eqref{eq:cylinder-dichotomy} forces \([\varepsilon']\subseteq[\varepsilon_{x_j}]\), and there \(f\equiv f(x_j)\).
Hence \(f\) is constant on \([\varepsilon']\), with \(\varepsilon'\) arbitrary.\qedhere
\end{itemize}
\end{proof}

\begin{lem}\label{lem:countable}
The group \(C(X,\Z)\) is countably infinite.
\end{lem}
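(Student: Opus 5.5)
The plan is to prove countability by exhibiting $C(X,\Z)$ as a countable union of countable sets, indexed by the resolution level supplied by \cref{lem:resolution}. Then show infiniteness separately, by an explicit injection of $\Z$ (or of $\N$) into $C(X,\Z)$.

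For $N\in\N$, let $C_N\subseteq C(X,\Z)$ be the set of functions that are constant on each level-$N$ cylinder. By \cref{lem:resolution}, $C(X,\Z)=\bigcup_{N\in\N}C_N$.

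For fixed $N$, define
\[
\Phi_N\colon C_N\to\Z^{\{0,1\}^N},\qquad \Phi_N(f)(\varepsilon')\coloneqq f(x_{\varepsilon'}),
\]
where $x_{\varepsilon'}\in[\varepsilon']$ is any chosen point. Each cylinder is nonempty, so such a point exists.

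This map is injective. The level-$N$ cylinders partition $X$ (by \eqref{eq:cylinder-dichotomy}), so every $x\in X$ lies in exactly one $[\varepsilon']$. There $f(x)=f(x_{\varepsilon'})$, hence $f$ is determined by $\Phi_N(f)$.

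The target $\Z^{\{0,1\}^N}$ is a finite product of countable sets, namely $2^N$ copies of $\Z$, so it is countable. Hence each $C_N$ is countable. A countable union of countable sets is countable, which uses countable choice, or here can be made explicit by enumerating pairs $(N,a)$ with $a\in\Z^{\{0,1\}^N}$. Therefore $C(X,\Z)$ is countable.

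For infiniteness, use the constant functions $c_k\equiv k$ for $k\in\Z$. Each $c_k$ is locally constant, and they are pairwise distinct because $X\neq\varnothing$. So $k\mapsto c_k$ is an injection $\Z\hookrightarrow C(X,\Z)$.

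The only mildly delicate point is well-definedness and injectivity of $\Phi_N$, which rests on the level-$N$ cylinders forming a partition into nonempty sets; the paper has already stated this. The genuine analytic input, compactness, is entirely contained in \cref{lem:resolution}, so no step here should present a real obstacle.
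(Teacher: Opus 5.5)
Your proposal is correct and follows the paper's proof essentially step for step: the same filtration by functions constant on level-\(N\) cylinders, exhausted via \cref{lem:resolution}; evaluation into \(\Z^{\{0,1\}^N}\); a countable union; and the constants for infiniteness. The only difference is that the paper shows its evaluation map \(E_N\) is a group isomorphism by writing down the inverse \(\sum_{\varepsilon'} g(\varepsilon')\,1_{[\varepsilon']}\), whereas you only prove injectivity, which is all that countability needs.
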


\begin{proof}
For \(N\ge 0\) set
\begin{equation}\label{eq:RN}
R_N\coloneqq\{f\in C(X,\Z)\mid f\ \text{is constant on each level-\(N\) cylinder}\}.
\end{equation}
\begin{itemize}
\item \step{Each \(R_N\) is countable.}
Evaluation on the (finitely many) level-\(N\) cylinders gives a homomorphism \(E_N\colon R_N\to\Z^{\{0,1\}^N}\), where \(E_N(f)(\varepsilon')\) is the value of \(f\) on the cylinder \([\varepsilon']\), well defined as \(f\) is constant there.
It is an isomorphism: its inverse sends \(g\colon\{0,1\}^N\to\Z\) to
\begin{equation}\label{eq:RN-inverse}
\sum_{\varepsilon'\in\{0,1\}^N}g(\varepsilon')\,1_{[\varepsilon']},
\end{equation}
where \(1_{[\varepsilon']}\) has value \(1\) on \([\varepsilon']\) and \(0\) elsewhere.
This sum is locally constant, hence lies in \(R_N\), and it is well defined because the level-\(N\) cylinders partition \(X\).
The two composites are the identity by evaluation on cylinders.
Since \(\{0,1\}^N\) is finite, \(\Z^{\{0,1\}^N}\cong\Z^{2^N}\) is countable, hence so is \(R_N\).
\item \step{Exhaustion.}
Refinement gives \(R_N\subseteq R_{N+1}\), and \cref{lem:resolution} gives \(C(X,\Z)=\bigcup_{N\ge0}R_N\).
Thus \(C(X,\Z)\) is a countable union of countable groups, hence countable.
It contains the constants \(\Z\), so it is countably infinite, that is, \(\abs{C(X,\Z)}=\aleph_0\).\qedhere
\end{itemize}
\end{proof}

By \cref{lem:countable}, \(\abs{C(X,\Z)}=\aleph_0\), whereas \(\abs[\big]{\bigoplus_{x\in X}\Z}\ge 2^{\aleph_0}\) by \eqref{eq:card-lower}.
The two groups have different cardinalities, so there is no bijection between them, a fortiori no group isomorphism:
\begin{equation}\label{eq:noniso}
C(X,\Z)\not\cong\bigoplus_{x\in X}\Z.
\end{equation}
Together with \(H_0(\G;\Z)=C(X,\Z)\) and \(H_n(\G;\Z)=0\) for \(n\ge 1\) (\cref{lem:Moore}) and \(\Hsing_0(B\G;\Z)\cong\bigoplus_{x\in X}\Z\) \eqref{eq:H0-cantor}, this proves \cref{prop:main}.

The example locates the mismatch.
In degree \(0\) the Matui-type theory sees locally constant integer-valued functions on the unit space, the singular theory path components.
On a totally disconnected space the first group is countable and the second is free of rank \(2^{\aleph_0}\), so the two invariants come apart at the first degree where either of them is nonzero.
Above that degree they agree, by \cref{rem:onlydeg0}.

\begin{acknowledgements}
The author is grateful to the anonymous reviewer, and thanks Henning Urbat and Stefan Milius for their advice.
\end{acknowledgements}

\end{document}